\DeclareFontFamily{U}{shuffle}{}
\DeclareFontShape{U}{shuffle}{m}{n}{ <-8>shuffle7 <8->shuffle10}{}
\newcommand{\bfs}{{\boldsymbol{\sl{s}}}}
\def\int{\displaystyle\!int}
\def\lim{\displaystyle\!lim}
\def\sum{\displaystyle\!sum}
\def\sup{\displaystyle\!sup}
\def\inf{\displaystyle\!inf}
\def\cap{\displaystyle\!cap}
\def\max{\displaystyle\!max}
\def\min{\displaystyle\!min}
\def\frac{\displaystyle\!frac}
\let\oldsection\section
\renewcommand\section{\setcounter{equation}{0}\oldsection}
\def\Z{\mathbb{Z}}
\def\ze{\zeta}
\theoremstyle{plain}
\newtheorem{thm}{Theorem}[section]
\newtheorem{lem}[thm]{Lemma}
\newtheorem{cor}[thm]{Corollary}
\newtheorem{pro}[thm]{Proposition}
\theoremstyle{definition}
\newtheorem{defn}{Definition}[section]
\newtheorem{exa}[thm]{Example}
\begin{document}
\title{\bf On a Class of Berndt-type Integrals and Related Barnes Multiple Zeta Functions}
\author{
		{ Xiang Chen$^{a,}$\thanks{Email: 3011270372@ahnu.edu.cn},\quad Ce Xu${}^{a,}$\thanks{Email: cexu2020@ahnu.edu.cn}\quad and\quad Jianing Zhou$^{b}$\thanks{Email: 202421511272@smail.xtu.edu.cn}\\
		\small $^{a}$ School of Mathematics and Statistics, Anhui Normal University, \\ Wuhu 241002, P.R.C.  \\
		\small $^{b}$ School of Mathematics and Computational Science, Xiangtan University, \\ Xiangtan 411105, P.R.C. }}
		\date{}
\maketitle
\begin{abstract}
This paper investigates a class of special Berndt-type integral calculations where the integrand contains only hyperbolic cosine functions. The research approach proceeds as follows:
Firstly, through contour integration methods, we transform the integral into a Ramanujan-type hyperbolic infinite series. Subsequently, we introduce a $\theta$-parameterized auxiliary function and apply the residue theorem from complex analysis to successfully simplify mixed-type denominators combining hyperbolic cosine and sine terms into a normalized Ramanujan-type hyperbolic infinite series with denominators containing only single hyperbolic function terms.
For these simplified hyperbolic infinite series, we combine properties of Jacobi elliptic functions with composite analytical techniques involving Fourier series expansion and Maclaurin series expansion. This ultimately yields an explicit expression as a rational polynomial combination of $\Gamma(1/4)$ and $\pi^{-1/2}$.
Notably, this work establishes a connection between the integral and Barnes multiple zeta functions, providing a novel research pathway for solving related problems.
\end{abstract}

\noindent \textbf{Keywords:} Berndt-type integral, Hyperbolic functions, Contour integration, Jacobi elliptic functions, Barnes multiple zeta functions.

\noindent \textbf{AMS Subject Classifications (2020):} 33E05, 33E20, 44A05, 11M99.

\section{Introduction}

Recently, Xu and Zhao \cite{XZ2024} defined anomalous integrals of the form:
\begin{align}\label{defn-Berndttypeintegrals}
{\rm BI}^{\pm}(s,m):=\int_{0}^{\infty} \frac{x^{s-1} \mathrm{d}x}{(\cos x \pm \cosh x)^{m}}
\end{align}
where $s\geq 1$ and $m\geq 1$ if the denominator has `$+$' sign, and $s\geq 2m+1$ otherwise. They termed these as \emph{$m$-th order Berndt-type integrals}, since Berndt \cite{Berndt2016} was the first to systematically study such integrals for the case $m=1$. But in fact, the study of such integrals can be traced back to over a century ago, when the renowned Indian mathematician Ramanujan posed the following integral problem in the \emph{Indian Journal of Pure and Applied Mathematics} \cite[pp. 325-326]{Rama1916}:
\begin{align}\label{defn-Ramanujantypeintegrals}
\int_{0}^{\infty} \frac{\sin (n x)}{x(\cos x+\cosh x)} \mathrm{d}x=\frac{\pi}{4} \quad(n \in \mathbb{N}).
\end{align}
Wilkinson \cite{W1916} provided the first rigorous proof in the fourth year after the problem was posed. Subsequently, other proofs were also discovered. For instance, Berndt \cite[Thm. 4.1]{Berndt2016} utilized the method of contour integration to prove this integral equality proposed by Ramanujan. Meanwhile, Berndt also employed methods of contour integration and  Jacobi elliptic functions to study the case of $m=1$ in the integral \eqref{defn-Berndttypeintegrals}. He demonstrated that certain types of integrals under this condition could be expressed as a combination of rational polynomials in $\Gamma(1/4)$ and $\pi^{-1/2}$, and provided specific examples. However, no general structural formulation was given. In a recent series of papers \cite{RXYZ2024,RXZ2023,XZ2023,XZ2024,ZhangRui2024}, Rui, Xu, Yang, Zhao, and Zhang extended Berndt's methods to establish structural theorems for arbitrary-order Berndt integrals: for all integers $m\geq 1$ and $p\geq [m/2]$,
\begin{align*}
{\rm BI}^+(4p+2,m)\in\mathbb{Q}[X,Y]\quad\text{and}\quad {\rm BI}^-(a,m)\in\mathbb{Q}[X,Y] \quad (0<a-2m\equiv 1 \pmod{4}),
\end{align*}
where $X:=\Gamma^4(1/4)$ and $Y:=\pi^{-1}$. The optimal order range for the variables $X$ and $Y$ in the polynomials has been explicitly determined (see \cite[Thms 1.1 and 1.2]{XZ2024}). Recently, Pan and Wang \cite{PanWang2025} utilized the method of contour integration to study integrals analogous to \eqref{defn-Ramanujantypeintegrals}, and obtained the following two results:
\begin{align*}
&\int_0^\infty\frac{x^b\sin(nx)}{\cos x-\cosh x}\mathrm{d}x=0\quad (b\equiv -1\ ({\rm mod}\ 4)\quad \text{and}\quad n\in \Z),\\
&\int_0^\infty \frac{x^{4p+1}\sin x}{\cos x-\cosh x}\mathrm{d}x=(-1)^{p+1}2^{2p}\pi^{4p+2}\frac{B_{4p+2}}{2p+1}\quad(0\leq p\in \Z),
\end{align*}
where $B_{m}$ is \emph{Bernoulli number} which is defined by the generating function
\begin{equation*}
 \frac{xe^x }{e^x-1}=\sum_{n=0}^\infty \frac{B_n}{n!}x^n.
\end{equation*}
Surprisingly, when the complete elliptic integrals of the first kind $K(x)$ and its complement $K:=K(1-x)$ appear in the denominator of the integrand in \eqref{defn-Berndttypeintegrals}, the integral can be expressed as higher-order derivatives of powers of certain Jacobi elliptic functions after division. In this area of research, the pioneering work was carried out by Ismail and Valent \cite{Ismail1998}, who discovered this landmark result:
\begin{align}\label{A.KuznetsovInt}
\int_{-\infty}^{\infty}{\frac{\mathrm{d}t}{\cos \left( K\sqrt{t} \right) +\cosh \left( K^{\prime}\sqrt{t} \right)}=2},
\end{align}
where the \emph{complete elliptic integrals of the first kind} $K$ is defined as follows (see Whittaker and Watson \cite{W1916}):
\begin{equation*}
    K:=K\left(k^2\right)=\int_{0}^{\pi/2} \frac{\mathrm{d}\varphi}{\sqrt{1 - k^2 \sin^2 \varphi}}=\frac{\pi}{2}\,{}_2F_1\left(\frac{1}{2},\frac{1}{2};1;k^2\right).
\end{equation*}
The ${}_2F_1(a,b;c;x)$ denotes the \emph{Gaussian hypergeometric function} defined by
\begin{equation*}
    {}_2F_1(a,b;c;x):=\sum_{n = 0}^{\infty} \frac{(a)_n(b)_n}{(c)_n}\frac{x^n}{n!}\quad (a,b,c\in\mathbb{C}),
\end{equation*}
where $\left( a \right) _0:=1$ and $\left( a \right)_n:=\frac{\Gamma \left( a+n \right)}{\Gamma \left( a \right)}=\prod_{j=0}^{n-1}{\left( a+j \right)}\quad (n\in \mathbb{N}),
$ and $ \Gamma(x)$ is the \emph{Gamma function} defined by
\begin{equation*}
    \Gamma(z):=\int_{0}^{\infty}e^{-t}t^{z - 1}\mathrm{d}t\quad (\Re(z)>0).
\end{equation*}
Similarly, the \emph{complete elliptic integral of the second kind} is denoted by (see Whittaker and Watson \cite{W1916})
\begin{equation*}
E:=E(x):=E(k^2):=\int\limits_{0}^{\pi/2} \sqrt{1-k^2\sin^2\varphi}\mathrm{d}\varphi=\frac {\pi}{2} {_2}F_{1}\left(-\frac {1}{2},\frac {1}{2};1;k^2\right).
\end{equation*}
Meanwhile, Kuznetsov \cite{K2017} further applied methods of contour integration and theta functions to extend the above integral \eqref{A.KuznetsovInt} to a more general form, obtaining the following remarkable result:
\begin{align}\label{Ims-Kuz+}
		\frac1{2}\int_{-\infty}^\infty \frac{t^n \mathrm{d}t}{\cos(K\sqrt{t})+\cosh(K^{\prime}\sqrt{t})}=(-1)^n \frac{d^{2n+1}}{du^{2n+1}} \frac{{\rm sn} (u,k)}{{\rm cd}(u,k)}|_{u=0},
	\end{align}
where $x=k^2\ (0<k<1)$ and ${\rm sn} (u,k)$ is the \emph{Jacobi elliptic function} defined through the inversion of the elliptic integral
	\begin{align}
		u=\int_0^\varphi \frac{\mathrm{d}t}{\sqrt{1-k^2\sin^2 t}}\quad (0<k^2<1),
	\end{align}
	that is, ${\rm sn}(u):=\sin \varphi$. As before, $k$ is referred to as the elliptic modulus. We also write $\varphi={\rm am}(u,k)={\rm am}(u)$ and call it the \emph{Jacobi amplitude}.
	The Jacobi elliptic function ${\rm cd} (u,k)$ can be defined as follows
	\begin{align*}
		& {\rm cd}(u,k):=\frac{{\rm cn}(u,k)}{{\rm dn}(u,k)},
	\end{align*}
	where ${\rm cn}(u,k):=\sqrt{1-{\rm sn}^2 (u,k)}$ and ${\rm dn} (u,k):=\sqrt{1-k^2{\rm sn}^2(u,k)}$. Recently, Bradshaw and Vignat \cite{BV2024} extended Kuznetsov's approach, investigating and establishing explicit relationships between integrals with denominators involving the difference of cosine and hyperbolic cosine functions and Jacobi elliptic functions:
for \( n \in \mathbb{N} \cup \{0\} \),
\begin{equation}
    \int_{\mathbb{R}} \frac{x^{n + 1}}{\cos(K\sqrt{x}) - \cosh(K^{\prime}\sqrt{x})}\mathrm{d}x = \left. (-1)^{n + 1} 8 \frac{d^{2n + 1}}{du^{2n + 1}} \frac{\text{sn}^2(u, k)}{\text{cd}^2(u, k) \text{sd}(2u, k)} \right|_{u = 0},
\end{equation}
where the Jacobi elliptic function \( \mathrm{sd}(u) \equiv \mathrm{sd}(u, k) \) is defined by
\begin{equation}
    \mathrm{sd}(u, k) := \frac{\mathrm{sn}(u, k)}{\mathrm{dn}(u, k)}.
\end{equation}
Another surprising result is that Bradshaw and Vignat \cite{BV2024} discovered this type of Berndt integral is closely related to the Barnes multiple zeta function. In particular, they provided the following explicit relationships: (\cite[Prop. 2]{BV2024})
\begin{align*}
&\int_0^\infty \frac{x^{s}\mathrm{d}x}{(\cos x-\cosh x)^m}=2^m \Gamma(a+1)\ze_{2m}(s+1,m|(1+i,1-i)^m)\quad (s\geq 2m,m\geq 1),\\
&\int_0^\infty \frac{x^{s}\mathrm{d}x}{(\cos x+\cosh x)^m}=2^m \Gamma(a+1){\bar \ze}_{2m}(s+1,m|(1+i,1-i)^m)\quad (s\geq 0,m\geq 1),
\end{align*}
where $\bfs^n$ means the string $\bfs$ is repeated $n$ times. Here for positive real numbers $a_1,\ldots,a_N$, the \emph{Barnes multiple zeta function} and \emph{alternating Barnes multiple zeta function} are defined by (\cite{KMT2023,R2000})
\begin{align*}
&\ze_N(s,\omega|a_1,\ldots,a_N):=\sum_{n_1\geq0,\ldots,n_N\geq 0} \frac{1}{(\omega+n_1a_1+\cdots+n_Na_N)^s}\quad (\Re(\omega)>0,\Re(s)>N),\\
&{\bar \ze}_N(s,\omega|a_1,\ldots,a_N):=\sum_{n_1\geq0,\ldots,n_N\geq 0} \frac{(-1)^{n_1+\cdots+n_N}}{(\omega+n_1a_1+\cdots+n_Na_N)^s}\quad (\Re(\omega)>0,\Re(s)> N-1),
\end{align*}
respectively.

In this paper, we focus on Berndt-type integrals of the form:
\begin{equation}
    \int_{0}^{\infty} \frac{x^s \mathrm{d}x}{\left[ \cosh(2x) - \cos(2x) \right] \left[ \cosh x - \cos x \right]}\quad (\Re(s)\geq 4).
\end{equation}
Here, we focus on Berndt-type integrals with integrands consisting purely of (hyperbolic) cosine functions, where each integrand is expressed as a product of two specific terms: (i) the sum of a cosine and a hyperbolic cosine function, and (ii) their difference. Through the application of contour integration and series expansions of Jacobi elliptic functions, we establish structural theorems and obtain explicit evaluations for these Berndt-type integrals. Specifically, we shall prove the following results
(also see Theorem \ref{thmEA}): let \( \Gamma = \Gamma(1/4) \), for \( m \in \mathbb{N}\setminus \{1\} \),
\begin{multline}\label{mainformula}
    \int_{0}^{\infty} \frac{x^{4m - 3} \mathrm{d}x}{\left[ \cosh(2x) - \cos(2x) \right] \left[ \cosh x - \cos x \right]} \in \mathbb{Q} \frac{\Gamma^{8m - 8}}{\pi^{2m - 2}} + \frac{\mathbb{Q}}{\sqrt{2}} \frac{\Gamma^{8m - 6}}{\pi^{(4m - 3)/2}} \\
    + \mathbb{Q} \frac{\Gamma^{8m-4}}{\pi^{2m-1}} + \frac{\mathbb{Q}}{\sqrt{2}} \frac{\Gamma^{8m - 2}}{\pi^{(4m + 1)/2}} + \mathbb{Q} \frac{\Gamma^{8m }}{\pi^{2m + 2}}.
\end{multline}
Additionally, we present the specific coefficients preceding each term and establish explicit relationships between the Berndt-type integrals and Barnes multiple zeta functions. In addition, we further clarify their connection with Barnes multiple zeta functions, thereby providing new closed-form evaluations and insightful perspectives (see Theorem \ref{thmFB}).

\section{Some Definitions and Lemmas}

This section lays the foundational mathematical framework for our subsequent analysis, employing rigorously constructed definitions and technical lemmas.
\begin{defn}
	Let \( s \in \mathbb{C} \) and \( a, b, \theta \in \mathbb{R} \) with \( |\theta| < 3b\pi \) and \( a, b \neq 0 \). Define
\[
\begin{aligned}
S_1(s, \theta; a, b) & := \frac{\pi^4 \sinh(\theta s)}{\sin(a\pi s) \sinh(b\pi s) \cosh^2(b\pi s)},\quad S_2(s, \theta; a, b)  := \frac{\pi^4 \cosh(\theta s)}{\cos(a\pi s) \sinh(b\pi s) \cosh^2(b\pi s)}, \\
S_3(s, \theta; a, b) & := \frac{\pi^4 \sinh(\theta s)}{\sin(a\pi s) \sinh^2(b\pi s) \cosh(b\pi s)}, \quad S_4(s, \theta; a, b) := \frac{\pi^4 \sinh(\theta s)}{\sin(a\pi s) \sinh^3(b\pi s) }.
\end{aligned}
\]
\end{defn}

\begin{defn}
	Let \(m \in \mathbb{N}\) and \(p \in \mathbb{Z}\). Define
	\begin{align*}
		C_{p,m}(y) &:=\sum_{n=1}^{\infty} \frac{(-1)^{n} n^{p}}{\sinh (n y) \cosh ^{m}(n y)}, \quad X_{p,m}(y) :=\sum_{n=1}^{\infty} \frac{(-1)^{n} n^{p}}{\sinh ^{m}(n y)}, \\
		DX_{p,m}(y) &:=\sum_{n=1}^{\infty} \frac{(-1)^{n} n^{p} \cosh(ny)}{\sinh ^{m}(n y)}, \quad
		C'_{p,m}(y) :=\sum_{n=1}^{\infty} \frac{(-1)^{n}(2 n-1)^{p-1}}{\sinh ((2 n-1) y / 2) \cosh ^{m}((2 n-1) y / 2)}, \\
		\overline{C}_{p,m}(y) &:=\sum_{n=1}^{\infty} \frac{(-1)^{n}(2 n-1)^{p}}{\sinh ^{m}((2 n-1) y / 2) \cosh ((2 n-1) y / 2)}, \quad
		T_{p,m}(y) :=\sum_{n=1}^{\infty} \frac{(-1)^{n}(2 n-1)^{p-1}}{\sinh ^{m}((2 n-1) y / 2)}, \\
		DT_{p,m}(y) &:=\sum_{n=1}^{\infty} \frac{(-1)^{n}(2 n-1)^{p} \cosh ((2 n-1) y / 2)}{\sinh ^{m}((2 n-1) y / 2)}, \quad
		X'_{p,m}(y) :=\sum_{n=1}^{\infty} \frac{(-1)^{n}(2 n-1)^{p}}{\cosh ^{m}((2 n-1) y / 2)}, \\
		DX'_{p,m}(y) &:=\sum_{n=1}^{\infty} \frac{(-1)^{n}(2 n-1)^{p+1} \sinh ((2 n-1) y / 2)}{\cosh ^{m}((2 n-1) y / 2)}, \quad
		B_{p,m}(y) :=\sum_{n=1}^{\infty} \frac{(-1)^{n} n^{p-1}}{\cosh ^{m}(n y)}, \\
		DB_{p,m}(y) &:=\sum_{n=1}^{\infty} \frac{(-1)^{n} n^{p} \sinh (n y)}{\cosh ^{m} n y}.
	\end{align*}
\end{defn}

\begin{lem}\emph{(\cite{PB1998})}\label{lemAB}
  Let \(\xi(s)\) be a kernel function and let \(r(s)\) be a function which is \(O(s^{-2})\) at infinity. Then
\begin{equation}
\sum_{\alpha \in O} \operatorname{Res}(r(s) \xi(s), s = \alpha) + \sum_{\beta \in S} \operatorname{Res}(r(s) \xi(s), s = \beta) = 0,
\end{equation}
where \(S\) denotes the set of poles of \(r(s)\), and \(O\) denotes the set of poles of \(\xi(s)\) that are not poles of \(r(s)\). Here, \(\operatorname{Res}(r(s), s = \alpha)\) stands for the residue of \(r(s)\) at \(s = \alpha\). The kernel function \(\xi(s)\) is meromorphic throughout the entire complex plane and satisfies \(\xi(s) = o(s)\) over an infinite family of circles \(|s| = \rho_{k}\) with \(\rho_{k} \to \infty\).
\end{lem}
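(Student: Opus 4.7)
The plan is to apply the classical residue theorem on the circular contours $C_k=\{s:|s|=\rho_k\}$ given in the hypothesis, and then send $\rho_k\to\infty$. First I would observe that the decay condition $r(s)=O(s^{-2})$ at infinity forces $r$ to be holomorphic outside some bounded disk; consequently $S$, the pole set of $r$, is finite. The pole set of $\xi$ may well be infinite, but by hypothesis it can only accumulate at $\infty$, so for every $k$ there are only finitely many poles of $r(s)\xi(s)$ inside $C_k$.

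Next, for $\rho_k$ large enough that $S\subset\{|s|<\rho_k\}$, the residue theorem yields
\begin{equation*}
\frac{1}{2\pi i}\oint_{C_k} r(s)\xi(s)\,ds \;=\; \sum_{\substack{\alpha\in O\\ |\alpha|<\rho_k}}\Res\bigl(r(s)\xi(s),s=\alpha\bigr)\;+\;\sum_{\beta\in S}\Res\bigl(r(s)\xi(s),s=\beta\bigr).
\end{equation*}
Here I am using the tacit convention that poles of $\xi$ cancelled by zeros of $r$ contribute zero residue and so may be ignored, while poles that belong to both $S$ and the pole set of $\xi$ are collected once, in the $S$-sum, with their full residues. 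This bookkeeping is really the only conceptual content of the lemma beyond the residue theorem itself.

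The decisive step is to show that the left-hand side vanishes in the limit. On the contour $C_k$, the hypothesis $r(s)=O(s^{-2})$ gives $|r(s)|\le M\rho_k^{-2}$ for some constant $M$ independent of $k$, while $|\xi(s)|=o(\rho_k)$ uniformly on $C_k$ by assumption. Hence
\begin{equation*}
\left|\oint_{C_k} r(s)\xi(s)\,ds\right| \;\le\; 2\pi\rho_k\cdot \frac{M}{\rho_k^{2}}\cdot o(\rho_k) \;=\; o(1)\qquad (k\to\infty).
\end{equation*}
Passing to the limit along the prescribed sequence $\rho_k\to\infty$ kills the contour integral and produces the asserted identity
\begin{equation*}
\sum_{\alpha\in O}\Res\bigl(r(s)\xi(s),s=\alpha\bigr) + \sum_{\beta\in S}\Res\bigl(r(s)\xi(s),s=\beta\bigr) \;=\; 0.
\end{equation*}

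I do not expect any serious obstacle: the proof is essentially a one-line application of the residue theorem combined with the ML-estimate on $C_k$. The only mildly delicate points are (i) confirming absolute convergence of the infinite $O$-sum, which follows from the same $o(\rho_k^{-1})$ bound on the integrand used to kill the contour integral, and (ii) being careful with poles that belong simultaneously to $O\cup S$ and to the pole set of $\xi$, which is handled by the convention above. No special structure of $\xi$ or $r$ beyond the stated growth hypotheses is needed.
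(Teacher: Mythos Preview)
Your argument is correct and is precisely the standard proof: apply the residue theorem on the circles $|s|=\rho_k$, use the ML-estimate together with the growth hypotheses $r(s)=O(s^{-2})$ and $\xi(s)=o(s)$ to kill the contour integral, and let $k\to\infty$. Note, however, that the paper does not supply its own proof of this lemma; it is quoted directly from Flajolet--Salvy \cite{PB1998}, and the companion Lemma~\ref{lemBA} is likewise stated with its proof omitted as ``completely analogous.'' Your write-up therefore fills in exactly what the paper leaves to the cited reference, and does so along the expected lines.
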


In fact, the Lemma \ref{lemBA} above can be entirely rewritten in the following unified form:
\begin{lem}\label{lemBA} Let \(f(s)\) be a meromorphic function on the complex plane such that \(f(s)=o(s^{-1})\) as $s\rightarrow \infty$. Then
\begin{equation}
\sum_{\alpha \in E} \operatorname{Res}(f(s), s = \alpha) = 0,
\end{equation}
where \(E\) denotes the set of poles of \(f(s)\).
\end{lem}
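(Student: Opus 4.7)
The plan is to derive Lemma \ref{lemBA} directly from the classical residue theorem applied to an expanding sequence of circles. First, since $f(s)$ is meromorphic on $\mathbb{C}$, its pole set $E$ has no accumulation point in $\mathbb{C}$; in particular, only finitely many poles lie in each closed disk $\{|s|\le R\}$. I can therefore pick an increasing sequence of radii $R_k\to\infty$ such that no pole of $f$ lies on the circle $C_k:=\{|s|=R_k\}$. (Concretely, in each annulus $R\le|s|\le R+1$ the complement of $E$ is open and contains circles of full measure of radii, so such $R_k$ always exist.)

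Second, for every such $R_k$ the residue theorem gives
\begin{equation*}
\frac{1}{2\pi i}\oint_{C_k} f(s)\,ds \;=\; \sum_{\alpha\in E,\ |\alpha|<R_k} \operatorname{Res}(f(s),s=\alpha).
\end{equation*}
The hypothesis $f(s)=o(s^{-1})$ as $s\to\infty$ means $M_k:=\sup_{|s|=R_k}|s\,f(s)|\to 0$, so the standard ML-estimate yields
\begin{equation*}
\left|\oint_{C_k} f(s)\,ds\right| \;\le\; 2\pi R_k\cdot\sup_{|s|=R_k}|f(s)| \;=\; 2\pi M_k \;\longrightarrow\; 0.
\end{equation*}
Letting $k\to\infty$ forces the partial sum of residues inside the disk $|s|<R_k$ to tend to $0$, which is exactly the asserted identity.

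The one point that deserves a word rather than a calculation is the meaning of the infinite sum $\sum_{\alpha\in E}\operatorname{Res}(f(s),s=\alpha)$. In general the residues need not be absolutely summable, so the identity must be interpreted in the disk-exhaustion sense made precise above, namely as the limit of partial sums over $\{|\alpha|<R_k\}$ for any admissible sequence $R_k\to\infty$. This is precisely the form in which the identity is applied to split residues into the two subsets $S$ (poles of $r$) and $O$ (poles of $\xi$) in Lemma \ref{lemAB}, so Lemma \ref{lemBA} is genuinely just the unified residue-balance statement, and there is no real obstacle beyond choosing the exhausting circles correctly.
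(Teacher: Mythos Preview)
Your argument is correct and is precisely the approach the paper has in mind: the paper does not give an independent proof but simply says the argument is ``completely analogous'' to the Flajolet--Salvy kernel lemma (Lemma~\ref{lemAB}), i.e.\ residue theorem on an expanding family of circles together with the ML-estimate forced by the decay hypothesis. Your added remark on interpreting the infinite sum via disk exhaustion is a useful clarification that the paper leaves implicit; the only minor caveat is that in practice (as in Lemma~\ref{lemAB}) the $o(s^{-1})$ condition is really understood to hold along a specific family of circles $|s|=\rho_k$, so the radii $R_k$ should be taken to be those $\rho_k$ rather than chosen merely to avoid poles.
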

\begin{proof}
The proof of this lemma is completely analogous to that of Lemma \ref{lemAB} in Reference \cite{PB1998}, and therefore we omit it here.
\end{proof}

To better describe the main results of our article, we adopt the notation used by Ramanujan, defining $x$, $y$, and $z$ by the following relations:
\begin{equation} \label{eveBB}
    x:=k^2,\ y(x):=\pi K^{\prime}/K,\ q\equiv q(x):=e^{-y},\ z:=z(x)=2K/\pi,\ z'=dz/dx.
\end{equation}
Clearly, taking the $n$-th derivative with respect to $z$ will yield:
\begin{equation*}
    \frac{\mathrm{d}^n z}{\mathrm{d}x^n}=\frac{(1/2)_n^2}{n!}\,{}_2F_1\left(\frac{1}{2}+n,\frac{1}{2}+n,1 + n;x\right).
\end{equation*}
And using the known result for the hypergeometric series (\cite{A2000})
\begin{equation*}
{}_2F_1\left(a,b;\frac{a + b + 1}{2};\frac{1}{2}\right)=\frac{\Gamma\left(\frac{1}{2}\right)\Gamma\left(\frac{a + b + 1}{2}\right)}{\Gamma\left(\frac{a + 1}{2}\right)\Gamma\left(\frac{b +1}{2}\right)},
\end{equation*}
it follows that
\begin{equation}
    \left. \frac{d^n z}{dx^n} \right|_{x=1/2}=\frac{(1/2)_n^2 \sqrt{\pi}}{\Gamma\left(\frac{n}{2}+\frac{3}{4}\right)}.
\end{equation}
In particular, taking \( x = 1/2\) and \( n = 0,1 \) in \eqref{eveBB}, we obtain that
\begin{equation*}
y = \pi, \quad z\left( \frac{1}{2} \right) = \frac{ \Gamma^2(1/4)}{2\pi^{3/2}}, \quad z'\left( \frac{1}{2} \right) = \frac{4\sqrt{\pi}}{ \Gamma^2(1/4)}.
\end{equation*}
Furthermore, there exists the following derivative relationship among $x$, $y$ and $z$ (see \cite[pp. 120, Entry 9(i)]{B1991}):
\begin{equation} \label{eveBD}
\frac{dx}{dy} = -x(1 - x)z^2.
\end{equation}

\begin{lem}\emph{(\cite[Thm. 2.4.]{XuZhao-2024})}\label{lemBB}
 Let \(x, y, z\) and \(z'\) satisfy \eqref{eveBB}. Given the formula \(\Omega(x, e^{-y}, z, z') = 0\), we have the transformation formula
\begin{equation}\label{eveBE}
\Omega\left(1 - x, e^{-\pi^{2} / y}, y z / \pi, \frac{1}{\pi}\left(\frac{1}{x(1 - x) z} - y z'\right)\right) = 0.
\end{equation}
\end{lem}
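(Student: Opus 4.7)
The plan is to interpret \eqref{eveBE} as the \emph{complementary-modulus} symmetry ($k\mapsto k'=\sqrt{1-k^{2}}$, equivalently $x\mapsto 1-x$) of Ramanujan's parametric variables. If an identity $\Omega(x,e^{-y(x)},z(x),z'(x))=0$ holds for every admissible $x$, then evaluating at the point $1-x$ in place of $x$ gives
\[
\Omega\bigl(1-x,\,e^{-y(1-x)},\,z(1-x),\,z'(1-x)\bigr)=0,
\]
so the lemma reduces to rewriting $y(1-x)$, $z(1-x)$, and $z'(1-x)$ in terms of the original $x, y, z, z'$.

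From \eqref{eveBB} I read off $K=\pi z/2$ and, via $y=\pi K'/K$, the relation $K'=yz/2$. Since the involution $k\mapsto k'$ swaps $K$ and $K'$, one has $K(1-x)=K'$ and $K'(1-x)=K$, whence
\[
y(1-x)=\frac{\pi K'(1-x)}{K(1-x)}=\frac{\pi K}{K'}=\frac{\pi^{2}}{y},\qquad z(1-x)=\frac{2K(1-x)}{\pi}=\frac{2K'}{\pi}=\frac{yz}{\pi}.
\]
In particular $e^{-y(1-x)}=e^{-\pi^{2}/y}$, which accounts for the first three arguments in \eqref{eveBE}.

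The remaining task is to identify $z'(1-x)$. Setting $G(x):=z(1-x)=yz/\pi$ (the right-hand side viewed as a function of $x$ through $y=y(x)$ and $z=z(x)$), the chain rule gives $G'(x)=-z'(1-x)$, so
\[
z'(1-x)=-\frac{d}{dx}\!\left(\frac{yz}{\pi}\right)=-\frac{1}{\pi}\!\left(\frac{dy}{dx}\,z+y\,z'\right).
\]
From \eqref{eveBD}, $dy/dx=-1/[x(1-x)z^{2}]$, hence $(dy/dx)\,z=-1/[x(1-x)z]$, and therefore
\[
z'(1-x)=-\frac{1}{\pi}\!\left(-\frac{1}{x(1-x)z}+yz'\right)=\frac{1}{\pi}\!\left(\frac{1}{x(1-x)z}-yz'\right),
\]
which is exactly the fourth argument of \eqref{eveBE}. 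Substituting all four expressions into $\Omega\bigl(1-x,e^{-y(1-x)},z(1-x),z'(1-x)\bigr)=0$ delivers \eqref{eveBE}.

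The only real obstacle is the sign bookkeeping: one must confirm that the minus sign coming from $d/d\tilde x=-d/dx$ (with $\tilde x=1-x$) combines correctly with the minus sign in $dy/dx$ supplied by \eqref{eveBD}, so that the final expression for $z'(1-x)$ matches \eqref{eveBE} exactly. Beyond this, the lemma is a direct consequence of the classical $k\leftrightarrow k'$ duality of the complete elliptic integrals together with the defining relations \eqref{eveBB}, so no further idea is required.
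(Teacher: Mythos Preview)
Your argument is correct: the transformation \eqref{eveBE} is nothing more than the complementary-modulus involution $x\mapsto 1-x$ applied to the four quantities in \eqref{eveBB}, and your computations of $y(1-x)$, $z(1-x)$, and $z'(1-x)$ (including the sign bookkeeping via \eqref{eveBD}) are all accurate. Note that the paper itself supplies no proof of this lemma---it is quoted verbatim from \cite[Thm.~2.4]{XuZhao-2024}---so there is no in-paper argument to compare against; your derivation is precisely the standard one underlying that reference.
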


\begin{lem}\emph{(\cite[Lem. 1.2.]{X2018})}\label{lemBC}
 Let \(n\) be an integer, then the following formulas hold:
\begin{align}
\frac{\pi}{\sin(\pi s)} &\stackrel{s \to n}{=} (-1)^n \left( \frac{1}{s - n} + 2\sum_{k=1}^{\infty} \overline{\zeta}(2k)(s - n)^{2k - 1} \right), \label{BF}\\
\frac{\pi}{\sinh(\pi s)} &\stackrel{s \to ni}{=} (-1)^n \left( \frac{1}{s - ni} + 2\sum_{k=1}^{\infty} (-1)^k \overline{\zeta}(2k)(s - ni)^{2k - 1} \right),\label{BG}\\
\frac{\pi}{\cos(\pi s)} &\stackrel{s \to n - 1/2}{=} (-1)^n \left\{ \frac{1}{s - \frac{2n - 1}{2}} + 2\sum_{k=1}^{\infty} \overline{\zeta}(2k) \left( s - \frac{2n - 1}{2} \right)^{2k - 1} \right\}, \\
\frac{\pi}{\cosh(\pi s)} &\stackrel{s \to (n - 1/2)i}{=} (-1)^n i \left\{ \frac{1}{s - \frac{2n - 1}{2}i} + 2\sum_{k=1}^{\infty} (-1)^k \overline{\zeta}(2k) \left( s - \frac{2n - 1}{2}i \right)^{2k - 1} \right\},
\end{align}
where $\zeta(s)$ and \(\overline{\zeta}(s)\) denote the \emph{Riemann zeta function} and \emph{alternating Riemann zeta function}, respectively, which are defined by
\[
\zeta(s):=\sum_{n=1}^\infty \frac1{n^s}\quad (\mathfrak{R}(s) > 1)\quad\text{and}\quad\overline{\zeta}(s) := \sum_{n=1}^{\infty} \frac{(-1)^{n - 1}}{n^s} \quad (\mathfrak{R}(s) > 0).
\]
\end{lem}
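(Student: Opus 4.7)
The plan is to derive all four Laurent expansions from the single classical series of $\pi/\sin(\pi s)$ at the origin, and then shift to the prescribed poles via elementary addition formulas for the (hyperbolic) sine and cosine.

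First I would establish the base expansion near $s = 0$:
\[
\frac{\pi}{\sin(\pi s)} = \frac{1}{s} + 2\sum_{k=1}^{\infty}\overline{\zeta}(2k)\,s^{2k-1}.
\]
Starting from the Mittag--Leffler expansion $\pi/\sin(\pi s) = 1/s + 2s\sum_{n\geq 1}(-1)^n/(s^2 - n^2)$, I expand $(s^2 - n^2)^{-1} = -\sum_{k\geq 0} s^{2k}/n^{2k+2}$, interchange the order of summation, and recognise $\sum_{n\geq 1}(-1)^{n-1}/n^{2k}$ as $\overline{\zeta}(2k)$. Identity \eqref{BF} then follows by the periodicity relation $\sin(\pi s) = (-1)^n\sin(\pi(s - n))$: setting $\epsilon := s - n$ and substituting into the base expansion reproduces the stated series.

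For the hyperbolic identity \eqref{BG}, I would expand $\sinh(\pi s)$ at $s = ni + \epsilon$ via the addition formula; because $\sinh(n\pi i) = 0$ and $\cosh(n\pi i) = (-1)^n$, one has $\sinh(\pi s) = (-1)^n\sinh(\pi\epsilon)$. The expansion of $\pi/\sinh(\pi\epsilon)$ near $\epsilon = 0$ is obtained for free from the $\sin$ expansion via $\sin(i\pi\epsilon) = i\sinh(\pi\epsilon)$, which replaces $s^{2k-1}$ by $i^{2k-1}\epsilon^{2k-1} = i(-1)^{k-1}\epsilon^{2k-1}$; combining with the overall prefactors yields the alternating factor $(-1)^k$ in the coefficient. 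The remaining two formulas are handled by the same scheme after the preliminary reductions $\cos(\pi s) = \sin(\pi(s + 1/2))$ and $\cosh(\pi s) = \cos(i\pi s)$, which respectively convert the half-integer poles to integer poles of $\sin$ and the imaginary half-integer poles to real half-integer poles of $\cos$. The pole-dependent prefactors $(-1)^n$ and $(-1)^n i$ that appear in the four statements arise from the values of $\sin(n\pi)$, $\cosh(n\pi i)$, $\sin(\pi(n - 1/2))$, and $\sinh((n - 1/2)\pi i)$ at the relevant pole.

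The arguments are entirely elementary; the only real source of friction is careful sign bookkeeping across the four cases. In particular, the alternating factor $(-1)^k$ appears exactly in the two hyperbolic expansions because the substitution $s\mapsto is$ multiplies $s^{2k-1}$ by $i^{2k-1} = i(-1)^{k-1}$, whereas no such factor is introduced in the purely real shifts underlying the $\sin$ and $\cos$ formulas. Once this accounting is performed carefully, all four identities emerge as specialisations of the same base Laurent series.
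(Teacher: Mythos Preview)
Your proposal is correct. The paper itself does not prove this lemma; it is quoted as \cite[Lem.~1.2]{X2018} and used as a black box, so there is no in-paper argument to compare against. Your derivation---establishing the base Laurent series of $\pi/\sin(\pi s)$ at the origin from the Mittag--Leffler expansion, then transporting it to each of the four families of poles via the periodicity/addition formulas and the substitutions $\sinh(\pi\epsilon)=-i\sin(i\pi\epsilon)$, $\cos(\pi s)=\sin(\pi(s+1/2))$, $\cosh(\pi s)=\cos(i\pi s)$---is exactly the standard way to obtain these expansions and would stand as a complete proof. One small slip in your wording: the prefactor in \eqref{BF} comes from $\cos(n\pi)=(-1)^n$ (not from $\sin(n\pi)$, which vanishes); likewise in the other cases it is the nonvanishing factor in the addition formula that supplies the sign. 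This does not affect the argument.
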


\begin{lem}\emph{(\cite[Eqs. (4.4)-(4.7)]{XZ2023})}\label{lemBD}
 Let \(n\) be an integer. Then we have
\begin{align}
\frac{(-1)^{n}}{\cosh \left(\frac{1+i}{2} z\right)} &\stackrel{z \to \tilde{n} \pi(1+i)}{=} 2i\left\{ \begin{array}{c}
\frac{1}{1+i} \cdot \frac{1}{z-\tilde{n} \pi(1+i)} \\
+\sum_{k=1}^{\infty} (-1)^{k} \frac{\overline{\zeta}(2k)}{\pi^{2k}} \left( \frac{1+i}{2} \right)^{2k-1} (z-\tilde{n} \pi(1+i))^{2k-1}
\end{array} \right\},\\
\frac{(-1)^{n}}{\cosh \left(\frac{1-i}{2} z\right)} &\stackrel{z \to \tilde{n} \pi(i-1)}{=} 2i\left\{ \begin{array}{c}
\frac{1}{1-i} \cdot \frac{1}{z-\tilde{n} \pi(i-1)} \\
+\sum_{k=1}^{\infty} (-1)^{k} \frac{\overline{\zeta}(2k)}{\pi^{2k}} \left( \frac{1-i}{2} \right)^{2k-1} (z-\tilde{n} \pi(i-1))^{2k-1}
\end{array} \right\}, \\
\frac{(-1)^{n}}{\sinh \left(\frac{1+i}{2} z\right)} &\stackrel{z \to n \pi(1+i)}{=} 2\left\{ \begin{array}{c}
\frac{1}{1+i} \cdot \frac{1}{z-n \pi(1+i)} \\
+\sum_{k=1}^{\infty} (-1)^{k} \frac{\overline{\zeta}(2k)}{\pi^{2k}} \left( \frac{1+i}{2} \right)^{2k-1} (z-n \pi(1+i))^{2k-1}
\end{array} \right\}, \\
\frac{(-1)^{n}}{\sinh \left(\frac{i-1}{2} z\right)} &\stackrel{z \to n \pi(1-i)}{=} 2\left\{ \begin{array}{c}
\frac{1}{i-1} \cdot \frac{1}{z-n \pi(1-i)} \\
+\sum_{k=1}^{\infty} (-1)^{k} \frac{\overline{\zeta}(2k)}{\pi^{2k}} \left( \frac{i-1}{2} \right)^{2k-1} (z-n \pi(1-i))^{2k-1}
\end{array} \right\},
\end{align}
where \(\tilde{n} := n - \frac{1}{2}\). 
\end{lem}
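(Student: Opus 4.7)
The plan is to deduce each of the four expansions in Lemma \ref{lemBD} from the two expansions for $\pi/\cosh(\pi s)$ and $\pi/\sinh(\pi s)$ given in Lemma \ref{lemBC}, by performing a direct linear change of variable. In each case the new variable $z$ is related to $s$ by $s=\lambda z$ with $\lambda\in\bigl\{\tfrac{1+i}{2\pi},\tfrac{1-i}{2\pi},\tfrac{i-1}{2\pi}\bigr\}$, so that $\pi s$ becomes $\tfrac{1\pm i}{2}z$ or $\tfrac{i-1}{2}z$ and the trigonometric/hyperbolic denominators of the kernels in Lemma \ref{lemBC} become exactly those in Lemma \ref{lemBD}. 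Since $z\mapsto\lambda z$ is an invertible entire map, it carries the Laurent expansion at each old pole bijectively to a Laurent expansion at the image pole, so only algebraic bookkeeping remains.

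Concretely, for the first identity I take $s=(1+i)z/(2\pi)$. Using $2i/(1+i)=1+i$, the pole $s=\tilde{n}i$ of $\pi/\cosh(\pi s)$ is mapped to $z=\tilde{n}\pi(1+i)$, and $s-\tilde{n}i=\tfrac{1+i}{2\pi}\bigl(z-\tilde{n}\pi(1+i)\bigr)$. Substituting this relation into the fourth expansion of Lemma \ref{lemBC}, dividing both sides by $\pi$, and simplifying using the identity $\tfrac{1}{\pi}\bigl(\tfrac{1+i}{2\pi}\bigr)^{2k-1}=\tfrac{1}{\pi^{2k}}\bigl(\tfrac{1+i}{2}\bigr)^{2k-1}$, one recovers the claimed expansion after multiplying through by $(-1)^n$; this cancels the $(-1)^n$ that was originally inside the bracket in Lemma \ref{lemBC} and produces the leading $2i$ on the right. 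The second identity is the complex conjugate of the first; equivalently, take $\lambda=(1-i)/(2\pi)$, which sends $\tilde{n}i$ to $\tilde{n}\pi(i-1)$.

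The third and fourth identities are handled in exactly the same fashion, starting from the $\sinh$ expansion \eqref{BG}. For the third, $s=(1+i)z/(2\pi)$ sends $s=ni$ to $z=n\pi(1+i)$ and yields the leading residue $\tfrac{2}{1+i}\cdot\tfrac{1}{z-n\pi(1+i)}$. For the fourth, the sign flip $\lambda=(i-1)/(2\pi)$ sends $s=ni$ to $z=2\pi ni/(i-1)=n\pi(1-i)$ and produces the leading residue $\tfrac{2}{i-1}\cdot\tfrac{1}{z-n\pi(1-i)}$; the $(-1)^k\overline{\zeta}(2k)$-coefficients transform with the corresponding powers of $\tfrac{i-1}{2}$ exactly as in the first case.

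I expect no serious obstacle: the argument is essentially mechanical once the four linear substitutions have been chosen. The only mild technicality is the interplay between the factor $\lambda^{2k-1}$ produced by the substitution and the prefactor $1/\pi$ left over after dividing out $\pi$, which together must collapse to the clean $(1\pm i)^{2k-1}/(2^{2k-1}\pi^{2k})$ appearing in Lemma \ref{lemBD}; one must also be careful with the sign of $\lambda$ in the fourth identity. Convergence of the Taylor tails on a small disk around each new pole is inherited from convergence on the corresponding disk around $\tilde{n}i$ or $ni$ in Lemma \ref{lemBC}, because $s=\lambda z$ is a local biholomorphism. Thus Lemma \ref{lemBD} is simply the image of Lemma \ref{lemBC} under these four elementary linear substitutions.
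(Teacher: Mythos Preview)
Your proposal is correct. The paper does not give its own proof of Lemma~\ref{lemBD}; it simply cites \cite[Eqs.~(4.4)--(4.7)]{XZ2023}. Your derivation via the linear substitutions $s=\lambda z$ with $\lambda\in\{\tfrac{1+i}{2\pi},\tfrac{1-i}{2\pi},\tfrac{i-1}{2\pi}\}$ applied to the $\cosh$ and $\sinh$ expansions of Lemma~\ref{lemBC} is exactly the natural argument, and your bookkeeping (the pole locations $\tilde{n}i\mapsto\tilde{n}\pi(1\pm i)$ and $ni\mapsto n\pi(1\pm i)$, the factor $\tfrac{1}{\pi}\bigl(\tfrac{1+i}{2\pi}\bigr)^{2k-1}=\tfrac{1}{\pi^{2k}}\bigl(\tfrac{1+i}{2}\bigr)^{2k-1}$, and the extraction of the overall $2i$ or $2$) checks out.
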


\begin{lem}\emph{(\cite{DCLMRT1992})}\label{lemBE}
	The Maclaurin series of \( \mathrm{cd}(u) \) and \( \mathrm{nd}(u) \) have the forms
	\begin{equation}
		\mathrm{cd}(u) = \sum_{n=0}^{\infty} S_{2n}(x) \frac{(-1)^n u^{2n}}{(2n)!} \quad \text{and} \quad \mathrm{nd}(u) = \sum_{n=0}^{\infty} A_{2n}(x) \frac{(-1)^n u^{2n}}{(2n)!},
	\end{equation}
	where $\mathrm{nd}(u) := 1 / \mathrm{dn}(u)$ and $S_{2n}(x), A_{2n}(x) \in \mathbb{Z}[x]$.
\end{lem}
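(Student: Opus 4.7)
The plan is to proceed in three steps: a parity argument that forces only even powers of $u$, the derivation of a pair of second-order ODEs with coefficients in $\mathbb{Z}[x]$, and an induction on $n$ using the recursion those ODEs impose on the Maclaurin coefficients.

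Since $\mathrm{cn}(u,k)$ and $\mathrm{dn}(u,k)$ are both even functions of $u$ (while $\mathrm{sn}(u,k)$ is odd), $\mathrm{cd}(u)=\mathrm{cn}(u)/\mathrm{dn}(u)$ and $\mathrm{nd}(u)=1/\mathrm{dn}(u)$ are also even, so their Maclaurin expansions contain only even powers of $u$. This produces real coefficients $S_{2n}(x), A_{2n}(x)$ with $S_0=A_0=1$; it remains to verify that their dependence on $x=k^2$ lies in $\mathbb{Z}[x]$. To that end, from the classical identity $\mathrm{cd}(u)=\mathrm{sn}(u+K,k)$ together with $(\mathrm{sn}')^2=(1-\mathrm{sn}^2)(1-k^2\mathrm{sn}^2)$, one differentiation yields $\mathrm{cd}''(u)=-(1+x)\mathrm{cd}(u)+2x\,\mathrm{cd}^3(u)$. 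For $\mathrm{nd}$, the derivative formula $\mathrm{nd}'=k^2\mathrm{sd}\,\mathrm{cd}$, combined with $k^2\mathrm{sn}^2=1-1/\mathrm{nd}^2$, eliminates $\mathrm{sn},\mathrm{cn},\mathrm{dn}$ to produce $(\mathrm{nd}')^2=(\mathrm{nd}^2-1)\bigl((x-1)\mathrm{nd}^2+1\bigr)$, and a further differentiation gives $\mathrm{nd}''(u)=-2(1-x)\mathrm{nd}^3(u)+(2-x)\mathrm{nd}(u)$.

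Substituting the assumed series into each ODE, the Cauchy product and the multinomial theorem yield
\begin{equation*}
\mathrm{cd}^3(u)=\sum_{n\geq 0}\frac{(-1)^n u^{2n}}{(2n)!}\sum_{i+j+k=n}\frac{(2n)!}{(2i)!(2j)!(2k)!}S_{2i}(x)S_{2j}(x)S_{2k}(x),
\end{equation*}
and analogously for $\mathrm{nd}^3(u)$. Matching coefficients of $(-1)^{n+1}u^{2n}/(2n)!$ on both sides of each ODE converts them into the recursions
\begin{align*}
S_{2n+2}(x)&=(1+x)\,S_{2n}(x)-2x\sum_{i+j+k=n}\frac{(2n)!}{(2i)!(2j)!(2k)!}S_{2i}(x)S_{2j}(x)S_{2k}(x),\\
A_{2n+2}(x)&=2(1-x)\sum_{i+j+k=n}\frac{(2n)!}{(2i)!(2j)!(2k)!}A_{2i}(x)A_{2j}(x)A_{2k}(x)-(2-x)\,A_{2n}(x).
\end{align*}
Since the multinomial coefficients are integers and the factors $(1\pm x)$, $(2-x)$, $2x$ all lie in $\mathbb{Z}[x]$, a straightforward induction on $n$ starting from $S_0(x)=A_0(x)=1$ gives $S_{2n}(x),A_{2n}(x)\in\mathbb{Z}[x]$ for every $n\geq 0$.

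The only genuinely delicate step I expect to be an obstacle is recasting $(\mathrm{nd}')^2$ as a \emph{polynomial} in $\mathrm{nd}$ with coefficients in $\mathbb{Z}[x]$: the naive expression $k^4\mathrm{sn}^2\mathrm{cn}^2/\mathrm{dn}^4$ must be massaged through the identities $\mathrm{cn}^2=1-\mathrm{sn}^2$, $\mathrm{dn}^2=1-k^2\mathrm{sn}^2$, and $\mathrm{nd}^2=1/(1-k^2\mathrm{sn}^2)$. Once that polynomial ODE is secured, everything else is routine coefficient bookkeeping driven by the integer-preserving recursions above.
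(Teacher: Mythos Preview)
Your argument is correct. The parity observation, the two second-order ODEs
\[
\mathrm{cd}''=-(1+x)\,\mathrm{cd}+2x\,\mathrm{cd}^3,\qquad
\mathrm{nd}''=-2(1-x)\,\mathrm{nd}^3+(2-x)\,\mathrm{nd},
\]
and the resulting integer-coefficient recursions all check out; the initial values $S_0=A_0=1$ then force $S_{2n},A_{2n}\in\mathbb{Z}[x]$ by induction. The step you flagged as delicate, rewriting $(\mathrm{nd}')^2$ as a polynomial in $\mathrm{nd}$, is exactly the computation you sketched and goes through cleanly once one uses $k^2\mathrm{sn}^2=1-\mathrm{nd}^{-2}$.

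As for comparison with the paper: there is nothing to compare. The paper does not supply a proof of this lemma at all; it simply quotes the statement from \cite{DCLMRT1992} and uses it as input to the later computations. Your write-up therefore goes beyond what the paper does by giving a self-contained derivation, and the ODE-plus-recursion approach you chose is the standard and efficient way to establish integrality of such Maclaurin coefficients.
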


\section{Berndt-Type Integrals via Hyperbolic Series}
In the present section, we delve into establishing rigorous and precise correspondences between Berndt-type integrals and Ramanujan-type hyperbolic series, leveraging the powerful tool of contour integration techniques. Our approach reveals deep connections between these apparently distinct mathematical objects.
\begin{thm}\label{thmCA}
 For any positive integer \( p \geq 5 \), the following identity holds:
\begin{align}
& \int_{0}^{\infty} \frac{x^{p} \, \mathrm{d}x}{[\cosh (2 x)-\cos (2 x)][\cosh x-\cos x]}-i^{p+1} \int_{0}^{\infty} \frac{x^{p} \, \mathrm{d}x}{[\cosh (2 x)-\cos (2 x)][\cosh x-\cos x]}\nonumber \\
& =\frac{1}{2^{p+3}} i (1+i)^{p+1} \pi^{p+1} \sum_{n=1}^{\infty} \frac{(-1)^{n-1}(2 n-1)^{p}}{\sinh^2 ((2 n-1) \pi / 2) \cosh ((2 n-1) \pi / 2)} \nonumber\\
& \quad-\frac{1}{4}((1+i))^{p-1} \pi^{p+1} \sum_{n=1}^{\infty} \frac{(-1)^{n} n^{p}}{\sinh (n \pi) \cosh ^{2}(n \pi)}-\frac{1}{2} (1+i)^{p-1} \pi^{p+1} \sum_{n=1}^{\infty} \frac{(-1)^{n}n^{p}}{\sinh ^{3}(n \pi )}\nonumber \\
& \quad+\frac{1}{4} p((1+i))^{p-1} \pi^{p} \sum_{n=1}^{\infty} \frac{(-1)^{n}n^{p-1}}{\sinh ^{2}(n \pi ) \cosh (n \pi )}.
\end{align}
\end{thm}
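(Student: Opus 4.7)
The plan is to apply the residue theorem in the form of Lemma~\ref{lemBA} (concretely via a large quarter-disc contour in the first quadrant) to the meromorphic kernel
\[
F(s) = \frac{\pi^{p+1}s^p}{D(\pi s)},\qquad D(x):=[\cosh(2x)-\cos(2x)][\cosh x-\cos x].
\]
The preparatory algebraic step is the chain of factorizations
\[
D(x)=2(\cosh x-\cos x)^2(\cosh x+\cos x)=16\sinh^2\!\bigl(\tfrac{1+i}{2}x\bigr)\sinh^2\!\bigl(\tfrac{1-i}{2}x\bigr)\cosh\!\bigl(\tfrac{1+i}{2}x\bigr)\cosh\!\bigl(\tfrac{1-i}{2}x\bigr),
\]
obtained from $\cosh x\mp\cos x=2\sinh(\tfrac{1+i}{2}x)\sinh(\tfrac{1-i}{2}x)$ and $2\cosh(\tfrac{1+i}{2}x)\cosh(\tfrac{1-i}{2}x)$; this exhibits the poles of $F$ on the two diagonals $s=t(1\pm i)$. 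The real-axis leg of the contour produces $\int_0^{\pi R}u^p/D(u)\,du$; the imaginary-axis leg, via the symmetry $D(ix)=D(x)$ together with $(ix)^p\,d(ix)=i^{p+1}x^p\,dx$, produces $-i^{p+1}\int_0^{\pi R}u^p/D(u)\,du$; the large arc vanishes thanks to the exponential decay of $1/D(\pi s)$ away from the diagonals. As $R\to\infty$ the contour integral becomes exactly the left-hand combination $(1-i^{p+1})I$.

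The first-quadrant interior contains two families of poles (for $n\geq 1$): simple poles at $s=(n-\tfrac12)(1+i)$ from the factor $\cosh(\tfrac{1+i}{2}\pi s)$, and double poles at $s=n(1+i)$ from the factor $\sinh^2(\tfrac{1+i}{2}\pi s)$; the origin $s=0$ is a removable singularity of $F$ precisely when $p\geq 5$ (for smaller $p$ an extra residue at $0$ would enter, which is why that hypothesis is imposed). At the simple poles, the relations $\tfrac{1+i}{2}\pi(n-\tfrac12)(1+i)=i(n-\tfrac12)\pi$ and $\tfrac{1-i}{2}\pi(n-\tfrac12)(1+i)=(n-\tfrac12)\pi$ reduce the residue computation to elementary values $\sinh^2(\tfrac{1+i}{2}\pi s_0)=-1$, $\sinh(\tfrac{1-i}{2}\pi s_0)=\sinh((n-\tfrac12)\pi)$, and analogous formulas for the $\cosh$ factors; $2\pi i$ times the sum produces the first summand on the right-hand side with prefactor $i(1+i)^{p+1}\pi^{p+1}/2^{p+3}$.

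For the double poles at $s_0=n(1+i)$, setting $z=s-s_0$ and using the identity $\sinh(\tfrac{1+i}{2}\pi s)=(-1)^n\sinh(\tfrac{1+i}{2}\pi z)$, the auxiliary function $h(s):=z^2/\sinh^2(\tfrac{1+i}{2}\pi s)$ is analytic and \emph{even} in $z$, so $h'(s_0)=0$ and hence $\Res_{s=s_0}F=G'(s_0)\,h(s_0)$, where $G$ collects the remaining regular factors. The critical simplification is $\tanh(\tfrac{1+i}{2}\pi\cdot n(1+i))=\tanh(in\pi)=i\tan(n\pi)=0$, which annihilates one of the four terms in the logarithmic derivative
\[
\frac{G'(s)}{G(s)}=\frac{p}{s}-(1-i)\pi\coth\!\bigl(\tfrac{1-i}{2}\pi s\bigr)-\tfrac{1+i}{2}\pi\tanh\!\bigl(\tfrac{1+i}{2}\pi s\bigr)-\tfrac{1-i}{2}\pi\tanh\!\bigl(\tfrac{1-i}{2}\pi s\bigr).
\]
The three surviving contributions at $s_0$, combined with the absorption $(1+i)^p(1-i)=2(1+i)^{p-1}$ and the identities $\coth(n\pi)/[\sinh^2(n\pi)\cosh(n\pi)]=1/\sinh^3(n\pi)$ and $\tanh(n\pi)/[\sinh^2(n\pi)\cosh(n\pi)]=1/[\sinh(n\pi)\cosh^2(n\pi)]$, produce after multiplication by $2\pi i$ the remaining three summands on the right with their stated prefactors. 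The principal obstacle is this bookkeeping of $(1\pm i)$-powers and signs through the double-pole expansion, together with verifying that the a priori complex residues on the $(1+i)$-diagonal combine into real-coefficient hyperbolic series on the right; that reality check hinges precisely on the vanishing of the middle $\tanh$ term, the $(1+i)(1-i)=2$ absorption, and the collapse $\cos(n\pi)=(-1)^n$ coming from the remaining $\cosh(\tfrac{1+i}{2}\pi s_0)$ factor.
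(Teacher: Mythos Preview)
Your proposal is correct and follows essentially the same route as the paper: a quarter-disc contour in the first quadrant applied to $z^p/D(z)$, the factorization $D(x)=16\sinh^2(\tfrac{1+i}{2}x)\sinh^2(\tfrac{1-i}{2}x)\cosh(\tfrac{1+i}{2}x)\cosh(\tfrac{1-i}{2}x)$ exposing the poles on the $(1+i)$-diagonal, the symmetry $D(ix)=D(x)$ producing the factor $1-i^{p+1}$ on the left, and residue summation over the simple poles $(n-\tfrac12)(1+i)\pi$ and the double poles $n(1+i)\pi$. The only noteworthy difference is in the mechanics of the double-pole residues: the paper simply invokes the precomputed Laurent expansions of Lemma~\ref{lemBD}, whereas you compute them directly via the logarithmic derivative $G'/G$ together with the evenness of $z^2/\sinh^2(\tfrac{1+i}{2}\pi z)$ and the vanishing $\tanh(in\pi)=0$, which is a slightly more self-contained bookkeeping but not a different argument.
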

\begin{proof}
 Let \( z = x + iy \), \( x, y \in \mathbb{R} \). Consider the contour integral
\begin{equation}
\mathscr{A} _p=\lim_{R\rightarrow \infty} \int_{C_R}{\frac{z^p\mathrm{d}z}{\left( \cosh 2z-\cos 2z \right) \left( \cosh z-\cos z \right)}}=\lim_{R\rightarrow \infty} \int_{C_R}{F\left( z \right) \mathrm{d}z},
\end{equation}
where $C_R$ denotes a quarter-circular contour consisting of three components: the interval $[0,R]$, the quarter-circle $\Gamma_R$ defined by $|z|=R$ with $0 \le \arg (z)\le \pi/2$, and the interval $[iR,0]$.
It is evident that poles arise when
\[
[\cosh(2z) - \cos(2z)][\cosh z - \cos z] = 16 \cos\left\{ \frac{1 + i}{2}z \right\} \cos\left\{ \frac{1 - i}{2}z \right\} \sin^2\left\{ \frac{1 + i}{2}z \right\} \sin^2\left\{ \frac{1 - i}{2}z \right\} = 0.
\]
The poles enclosed by \( C_R \) are located at $ z_m = m\pi(1 + i) $ (where $m \geq 1, |z_m| < R$)  and $ z_n = (2n - 1)\pi(1 + i)/2$  (where $n \geq 1, |z_n| < R$). By virtue of Lemma \ref{lemBD}, the residues  \( \text{Res}[F(z), z] \) at these poles are expressed as follows:
\begin{equation}
\operatorname{Res}[F(z), z_n] = \frac{(-1)^{n - 1}(1 + i)^{p+1}(2n-1)^{p} \pi^{p} }{2^{p+4} \sinh^2((2n-1)\pi / 2) \cosh((2n-1)\pi / 2)}
\end{equation}
and
\begin{align}
\operatorname{Res[}F(z),z_m]&=\frac{i}{4}(1+i)^{p-1}\pi ^p\frac{(-1)^mm^p}{\sinh ^3(m\pi)} +\frac{i}{8}(1+i)^{p-1}\pi ^p\frac{(-1)^mm^p}{\sinh (m\pi) \cosh ^2(m\pi)}
	\nonumber \\
	&\quad -\frac{i}{8}p(1+i)^{p-1}\pi ^{p-1}\frac{(-1)^mm^{p-1}}{\sinh ^2(m\pi) \cosh (m\pi)}.
\end{align}
As \( R \to \infty \), we get
\[
\int_{\Gamma_{R}} \frac{z^{p} \mathrm{d}z}{[\cosh(2z) - \cos(2z)][\cosh z - \cos z]} = o(1).
\]
Applying the residue theorem and taking the limit as \( R \to \infty \), we obtain
\[
\begin{aligned}
& 2\pi i \sum_{m=1}^{\infty} \text{Res}\left[F(z), z_{m}\right] + 2\pi i \sum_{n=1}^{\infty} \text{Res}\left[F(z), z_{n}\right] \\
& = \lim_{R \to \infty} \int_{C_{R}} \frac{z^{p} \mathrm{d}z}{[\cosh(2z) - \cos(2z)][\cosh z - \cos z]} \\
& = \int_{0}^{\infty} \frac{x^{p} \mathrm{d}x}{[\cosh(2x) - \cos(2x)][\cosh x - \cos x]} - i \int_{0}^{\infty} \frac{(ix)^{p} \mathrm{d}x}{[\cosh(2ix) - \cos(2ix)][\cosh(ix) - \cos(ix)]} \\
& = \int_{0}^{\infty} \frac{x^{p} \mathrm{d}x}{[\cosh(2x) - \cos(2x)][\cosh x - \cos x]} - i^{p+1} \int_{0}^{\infty} \frac{x^{p} \mathrm{d}x}{[\cosh(2x) - \cos(2x)][\cosh x - \cos x]},
\end{aligned}
\]
by synthesizing these findings, we arrive at the identity we aimed to prove.
\end{proof}
\begin{thm}\label{thmCB}
For \( a, b, \theta \in \mathbb{R} \) and \( |\theta| < 3b\pi \), \( a, b \neq 0 \), we have
\begin{align}
& b^{2}\pi \sum_{n=1}^{\infty} \frac{(-1)^{n} \sinh(n\theta/a)}{\sinh(bn\pi/a) \cosh^{2}(bn\pi/a)} + ab\pi \sum_{n=1}^{\infty} \frac{(-1)^{n} \sin(n\theta/b)}{\sinh(an\pi/b)} - a\theta \sum_{n=1}^{\infty} \frac{(-1)^{n} \cos((2n-1)\theta/(2b))}{\sinh((2n-1)a\pi/(2b))} \notag\\
& + a^{2}\pi \sum_{n=1}^{\infty} \frac{(-1)^{n} \sin((2n-1)\theta/(2b)) \cosh((2n-1)a\pi/(2b))}{\sinh^{2}((2n-1)a\pi/(2b))} + \frac{\theta b}{2} = 0,\label{joyCE}\\
& b^{2}\pi \sum_{n=1}^{\infty} \frac{(-1)^{n} \cosh((2n-1)\theta/(2a))}{\sinh((2n-1)b\pi/(2a)) \cosh^{2}((2n-1)b\pi/(2a))} + a\theta \sum_{n=1}^{\infty} \frac{(-1)^{n} \sin((2n-1)\theta/(2b))}{\cosh((2n-1)a\pi/(2b))}\notag \\
& + a^{2}\pi \sum_{n=1}^{\infty} \frac{(-1)^{n} \cos((2n-1)\theta/(2b)) \sinh((2n-1)a\pi/(2b))}{\cosh^{2}((2n-1)a\pi/(2b))} + ab\pi \sum_{n=1}^{\infty} \frac{(-1)^{n} \cos(n\theta/b)}{\cosh(an\pi/b)} \notag\\
& + \frac{ab\pi}{2} = 0,\label{joyCF}\\
& b^{2}\pi^{2} \sum_{n=1}^{\infty} \frac{(-1)^{n} \cosh(n\theta/a)}{\sinh^{2}(n b\pi/a) \cosh(n b\pi/a)} + a \pi \theta \sum_{n=1}^{\infty} \frac{(-1)^{n} \sin(n\theta/b)}{\sinh(an\pi/b)} \notag\\
& + a^{2} \pi^{2} \sum_{n=1}^{\infty} \frac{(-1)^{n} \cos(n\theta/b) \cosh(an\pi/b)}{\sinh^{2}(an\pi/b)} + ab\pi^{2} \sum_{n=1}^{\infty} \frac{(-1)^{n-1} \cos((2n-1)\theta/(2b))}{\sinh((2n-1)a\pi/(2b))} \notag\\
&+ \frac{a^{2}\pi^{2}-5 b^{2}\pi^{2}+3 \theta^{2}}{12} = 0.\label{joyCG}
\end{align}
\end{thm}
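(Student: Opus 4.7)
The plan is to derive each of the three identities \eqref{joyCE}--\eqref{joyCG} as an instance of Lemma~\ref{lemBA} applied to a kernel of the type in Definition~2.1: specifically, $S_1(s,\theta;a,b)$ will produce \eqref{joyCE}, $S_2(s,\theta;a,b)$ will produce \eqref{joyCF}, and a kernel of $S_3$-type (with the $\sinh/\cosh$ placement in the numerator chosen so that the resulting function is odd in $s$, which is what produces the $\cosh(n\theta/a)$ terms in \eqref{joyCG}) will produce the third. Each such kernel is $\pi^4$ times a hyperbolic function of $\theta s$ divided by one trigonometric factor in $a\pi s$ and three hyperbolic factors in $b\pi s$, so the denominator grows like $e^{3b\pi|\Re s|}$ and the hypothesis $|\theta|<3b\pi$ supplies the $o(s^{-1})$ decay required by Lemma~\ref{lemBA}.

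I would then sort the poles into four classes: real zeros of $\sin(a\pi s)$ or $\cos(a\pi s)$ (simple); imaginary zeros of $\sinh(b\pi s)$ at $s=ni/b$ (simple in $S_1,S_2$, double in $S_3$); imaginary zeros of $\cosh(b\pi s)$ at $s=(2n-1)i/(2b)$ (double in $S_1,S_2$, simple in $S_3$); and the origin. At simple poles I would apply Lemma~\ref{lemBC} together with the identifications $\sinh(ix)=i\sin x$ and $\cosh(ix)=\cos x$, obtaining residues of the form $(-1)^n\pi^3\sinh(n\theta/a)/[a\sinh(bn\pi/a)\cosh^2(bn\pi/a)]$ and its analogues. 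At double poles I would use $\operatorname{Res}_{s_0}f=\lim_{s\to s_0}(d/ds)[(s-s_0)^2 f(s)]$ after the local factorization $\cosh(b\pi s)=i(-1)^{n-1}b\pi(s-s_0)[1+O((s-s_0)^2)]$ near $s_0=(2n-1)i/(2b)$ (and its $\sinh$-analogue near $s_0=ni/b$); each such double-pole residue naturally splits into two pieces, producing both the $\sinh^{-1}$ sums (for instance the $-a\theta\sum$ term in \eqref{joyCE}) and the $\sinh^{-2}\cosh$ sums that appear alongside them.

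For the residue at $s=0$ I would Taylor-expand every factor through quadratic order: for $S_1,S_2$ this is a simple pole giving residues $\pi^2\theta/(ab)$ and $\pi^3/b$ respectively, while for the $S_3$-type kernel it is a triple pole whose Laurent expansion yields the residue $\pi(3\theta^2+a^2\pi^2-5b^2\pi^2)/(6ab^2)$. Because each kernel is odd in $s$, residues at $\pm s_0$ coincide, so summing over $n\in\mathbb{Z}\setminus\{0\}$ reduces to twice the sum over $n\geq 1$; multiplying the identity $\sum\operatorname{Res}=0$ by the normalizing factor $ab^2/(2\pi^2)$ (respectively $ab^2/(2\pi)$ for \eqref{joyCG}) then rescales everything to the stated form, with the $s=0$ residues furnishing the constant terms $\theta b/2$, $ab\pi/2$, and $(a^2\pi^2-5b^2\pi^2+3\theta^2)/12$.

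The main obstacle will be sign and factor bookkeeping at the double poles: the evaluations $\sinh((2n-1)i\pi/2)=i(-1)^{n-1}$ and $\cosh(ni\pi)=(-1)^n$ combine with derivatives of $\sinh(\theta s)$ and of $\sin(a\pi s)\sinh(b\pi s)$ inside the computation of $g'(s_0)$, and a single sign slip would corrupt the relative sign between the $\sinh^{-1}$ and $\sinh^{-2}\cosh$ pieces of each identity. The triple-pole expansion at $s=0$ for \eqref{joyCG} is a close second, requiring consistent tracking of all $O(s^2)$ corrections from $\sinh(\theta s)$, $\sin(a\pi s)$, $\sinh^2(b\pi s)$, and $\cosh(b\pi s)$ to recover the precise combination $a^2\pi^2-5b^2\pi^2+3\theta^2$.
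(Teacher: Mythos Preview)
Your proposal is correct and follows essentially the same route as the paper: apply Lemma~\ref{lemBA} to $S_1$, $S_2$, and an $S_3$-type kernel, sort the poles exactly as you describe, and read off the three identities after normalization. You have in fact been more careful than the printed proof on two points: the kernel for \eqref{joyCG} must carry $\cosh(\theta s)$ in the numerator (not $\sinh(\theta s)$ as in Definition~2.1, which is a typo---otherwise the residues at $s=\pm n/a$ would produce $\sinh(n\theta/a)$ rather than the $\cosh(n\theta/a)$ appearing in \eqref{joyCG}), and with that correction the pole at $s=0$ is indeed a triple pole, whose Laurent expansion gives precisely $\pi(3\theta^{2}+a^{2}\pi^{2}-5b^{2}\pi^{2})/(6ab^{2})$ as you state (the paper's text calls this pole ``simple'' but then silently computes the triple-pole residue).
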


\begin{proof}
	The core of the proof lies in the analysis of the pole distribution of \( S_{i}(s, \theta, a, b) \) (\( i = 1, 2, 3 \)) and  calculating their residues.
These functions are meromorphic over the entire complex plane, with simple poles at specific points.

To begin with, the function \( S_{1}(s, \theta, a, b) \) possesses poles at \( s = \pm \frac{\pi}{a} \) (simple poles), \( s = \pm \frac{\pi i}{a} \) (simple poles), \( s = \pm \frac{(2n-1)i}{2b} \) (double poles) and \( s = 0 \) (simple poles), where \( n \in \mathbb{N} \).

As \( s = \pm \frac{n}{a} \) and upon application of Lemma \ref{lemBC}, the residue is

\begin{equation} \label{joyCH}
\text{Res}\left[S_{1}, s = \pm \frac{n}{a}\right] = \frac{\pi^{3}}{a} \frac{(-1)^{n} \sinh(n\theta/a)}{\sinh(bn\pi/a) \cosh^{2}(bn\pi/a)}.
\end{equation}
As \( s = \pm \frac{ni}{b} \) and the residue is
\begin{equation}
\text{Res}\left[S_{1}, s = \pm \frac{ni}{b}\right] = \frac{\pi^{3}}{b} \frac{(-1)^{n} \sin(n\theta/a)}{\sinh(an\pi/b)}.
\end{equation}

From Lemma \ref{lemBD}, for every  \( n \in \mathbb{Z} \) we can derive the asymptotic expansions of several reciprocal quadratic trigonometric and hyperbolic functions, which are presented as follows
\begin{align}\label{joyCAO}
\left( \frac{\pi}{\cosh(b\pi s)} \right)^{2} &= -\frac{1}{\left( bs - \frac{2n-1}{2}i \right)^{2}} + 2\zeta(2) - 6\zeta(4) \left( bs - \frac{2n-1}{2}i \right)^{2} \nonumber\\
&\quad + o\left( \left( bs - \frac{2n-1}{2}i \right)^{2} \right) \quad \text{as } s \to \frac{2n-1}{2b}i.
\end{align}
As \( s = \pm \frac{(2n-1)i}{2b} \) and the residue is
\begin{align}
\text{Res}\left[S_{1}, s = \pm \frac{(2n-1)i}{2b}\right] &= -\frac{\pi^{2}\theta}{b^{2}} \frac{(-1)^{n} \cos((2n-1)\theta/(2b))}{\sinh((2n-1)a\pi/(2b))}\nonumber \\
&\quad + \frac{a\pi^{2}}{b^{3}} \frac{(-1)^{n} \sin((2n-1)\theta/(2b)) \cosh((2n-1)a\pi/(2b))}{\sinh^{2}((2n-1)a\pi/(2b))}.
\end{align}
Moreover, by applying equations \eqref{BF} and \eqref{BG}, we can derive the following expansion
\begin{equation}
S_{1}(s, \theta; a, b) = \frac{\pi^{2}}{\cosh^{2}(b\pi s)} \left( \frac{\theta}{ab} \frac{1}{s} + \frac{\theta^{3}}{6ab} s - \left( \frac{b}{a} - \frac{a}{b} \right) \zeta(2)\theta s + o(1) \right), \quad s \to 0.
\end{equation}

\begin{flushleft}Accordingly, as \( s = 0 \) and the residue is\end{flushleft}
\begin{equation} \label{joyCAC}
\text{Res}\left[S_{1}, s = 0\right] = \frac{\theta\pi^{2}}{ab}.
\end{equation}

By Lemma 3.1, summing the five contributions \eqref{joyCH}-\eqref{joyCAC} yields the desired result \eqref{joyCE}.

Next, the function \( S_{2}(s, \theta; a, b) \) possesses poles at \( s = \pm \frac{2n-1}{2a} \) (simple poles), \( s = \pm \frac{ni}{b} \) (simple poles), \( s = \pm \frac{(2n-1)i}{2b} \) (double poles) and \( s = 0 \) (simple poles), where \( n \in \mathbb{N} \).
As \( s = \pm \frac{2n-1}{2a} \) and by applying Lemma \ref{lemBC}, the residue is
\begin{equation}\label{S2joyCAC}
\text{Res}\left[S_{2}, s = \pm \frac{2n-1}{2a}\right] = \frac{\pi^{3}}{a} \frac{(-1)^{n} \cosh((2n-1)\theta/(2a))}{\sinh((2n-1)b\pi/(2a)) \cosh^{2}((2n-1)b\pi/(2a))}.
\end{equation}
As \( s = \pm \frac{ni}{b} \) and the residue is
\begin{equation}
\text{Res}\left[S_{2}, s = \pm \frac{ni}{b}\right] = \frac{\pi^{3}}{b} \frac{(-1)^{n} \cos(n\theta/b)}{\cosh(an\pi/b)}.
\end{equation}
For the second-order pole at\( s = \pm \frac{(2n-1)i}{2b} \), we make use of the expansion given in \eqref{joyCAO} to derive the following result
\begin{align}\label{joyCAF}
\text{Res}\left[S_{2}, s = \pm \frac{(2n-1)i}{2b}\right] &= \frac{\pi^{2}\theta}{b^{2}} \frac{(-1)^{n} \sin((2n-1)\theta/(2b))}{\cosh((2n-1)a\pi/(2b))} \nonumber\\
&\quad + \frac{a\pi^{2}}{b^{3}} \frac{(-1)^{n} \cos((2n-1)\theta/(2b)) \sinh((2n-1)a\pi/(2b))}{\cosh^{2}((2n-1)a\pi/(2b))}.
\end{align}
For \( s = 0 \), by using the expansions given in \eqref{BG}, we can derive the following
\[
S_{2}(s, \theta; a, b) = \frac{\pi^{3} \cosh(\theta s)}{\cos(a\pi s) \cosh^{2}(b\pi s)} \left( \frac{1}{bs} - \zeta(2)bs + o(1) \right), \quad s \to 0,
\]
which gives the residue
\begin{equation}
\text{Res}\left[S_{2}, s = 0\right] = \frac{\pi^{3}}{b}.
\end{equation}

\begin{flushleft}In accordance with Lemma \ref{lemBA}, the sum of these four residue contributions \eqref{S2joyCAC} to \eqref{joyCAF} serves to establish the intended identity \eqref{joyCF}.\end{flushleft}

Finally, the function \( S_{3}(s, \theta; a, b) \) have poles at \( s = \pm \frac{n}{a} \) (simple poles), \( s = \pm \frac{(2n-1)i}{2b} \) (simple poles), \( s = \pm \frac{ni}{b} \) (double poles) and \( s = 0 \) (simple poles), where \( n \in \mathbb{N} \).
As \( s = \pm \frac{n}{a} \) and through application of Lemma \ref{lemBC}, we calculate the residue at these simple poles
\begin{equation} \label{joyCAH}
\text{Res}\left[S_{3}, s = \pm \frac{n}{a}\right] = \frac{\pi^{3}}{a} \frac{(-1)^{n} \cosh(n\theta/a)}{\sinh^{2}(nb\pi/a) \cosh(nb\pi/a)}.
\end{equation}
As \( s = \pm \frac{(2n-1)i}{2b} \), the simple poles contribute
\begin{equation}
\text{Res}\left[S_{3}, s = \pm \frac{(2n-1)i}{2b}\right] = \frac{\pi^{3}}{b} \frac{(-1)^{n-1} \cos((2n-1)\theta/(2b))}{\sinh((2n-1)a\pi/(2b))}.
\end{equation}
For the second-order pole at \( s = \pm \frac{ni}{b} \), we make use of the asymptotic expansion provided in Lemma \ref{lemBC}
\[
\left( \frac{\pi}{\sinh(b\pi s)} \right)^{2} = \frac{1}{(bs - ni)^{2}} - 2\zeta(2) + 6\zeta(4) (bs - ni)^{2} + o\left( (bs - ni)^{2} \right) \quad \text{as } s \to ni,
\]
resulting in
\begin{equation} \label{joyCBO}
\text{Res}\left[S_{3}, s = \pm \frac{ni}{b}\right] = \frac{\pi^{2}\theta}{b^{2}} \frac{(-1)^{n} \sin(n\theta/b)}{\sinh(an\pi/b)} + \frac{a\pi^{3}}{b^{2}} \frac{(-1)^{n} \cos(n\theta/b) \cosh(an\pi/b)}{\sinh^{2}(an\pi/b)}.
\end{equation}
As \( s = 0 \), the asymptotic behavior in the vicinity of the origin is expressed as
\[
S_{3}(s, \theta; a, b) = \frac{\pi}{ \cosh(b\pi s)} \left(\frac{1}{b^{2}s^{2}} - 2\zeta(2) + o(1) \right)\left(\frac{1}{as} + \zeta(2)as + o(as) \right), \quad s \to 0,
\]
from which we extract the residue
\begin{equation}
\text{Res}\left[Z_{3}, s = 0\right] = \frac{a^{2}\pi^{3}-5 b^{2}\pi^{3}+3 \theta^{2}\pi}{6ab^{2}}.
\end{equation}
By applying Lemma \ref{lemBA} to the sum of these four residue contributions \eqref{joyCAH} to \eqref{joyCBO}, the identity \eqref{joyCG} is established.

 This concludes the proof of Theorem \ref{thmCB}.
\end{proof}
\begin{thm}
Let \( p > 1 \) be an odd integer, we establish the following equations:
\begin{align}
 &\sum_{n=1}^{\infty} \frac{(-1)^{n} n^{p-1}}{\cosh(n y) \sinh^{2}(n y)}= -\frac{\pi^{p-1}}{y^{p}} (p-1)(-1)^{(p-3)/2} \sum_{n=1}^{\infty} \frac{(-1)^{n} n^{p-2}}{\sinh(\pi^{2} n / y)}-\frac{w_p}{2y^2} \nonumber\\
&\qquad\qquad\qquad\qquad\qquad\quad + \frac{\pi^{p}}{2^{p-1} y^{p}} (-1)^{(p-1)/2} \sum_{n=1}^{\infty} \frac{(-1)^{n} (2n-1)^{p-1}}{\sinh((2n-1)\pi^{2}/(2y))} \nonumber\\
&\qquad\qquad\qquad\qquad\qquad\quad - \frac{\pi^{p+1}}{ y^{p+1}} (-1)^{(p-1)/2} \sum_{n=1}^{\infty} \frac{(-1)^{n} n^{p-1} \cosh(n\pi^{2}/y)}{\sinh^{2}(n\pi^{2}/y)},\label{joyCBB}\\
 &\sum_{n=1}^{\infty} \frac{(-1)^{n} n^{p}}{\sinh(n y) \cosh^{2}(n y)} = -\frac{\pi^{p+1}}{y^{p+1}} (-1)^{(p-1)/2} \sum_{n=1}^{\infty} \frac{(-1)^{n} n^{p}}{\sinh(\pi^{2} n / y)} \nonumber\\
&\quad \qquad\qquad\qquad\qquad\qquad+ \frac{p\pi^{p}}{2^{p-1} y^{p+1}} (-1)^{(p-1)/2} \sum_{n=1}^{\infty} \frac{(-1)^{n} (2n-1)^{p-1}}{\sinh((2n-1)\pi^{2}/(2y))} \nonumber\\
&\quad\qquad\qquad\qquad\qquad\qquad - \frac{\pi^{p+2}}{2^{p} y^{p+2}} (-1)^{(p-1)/2} \sum_{n=1}^{\infty} \frac{(-1)^{n} (2n-1)^{p} \cosh((2n-1)\pi^{2}/(2y))}{\sinh^{2}((2n-1)\pi^{2}/(2y))},\label{joyCBC}\\
 &\sum_{n=1}^{\infty} \frac{(-1)^{n} (2n-1)^{p}}{\sinh^{2}((2n-1)y/2) \cosh((2n-1)y/2)}= -\frac{\pi^{p+1}}{y^{p+1}} (-1)^{(p-1)/2} \sum_{n=1}^{\infty} \frac{(-1)^{n} (2n-1)^{p}}{\cosh((2n-1)\pi^{2}/(2y))} \nonumber\\
&\quad \qquad\qquad\qquad\qquad\qquad\qquad\qquad\qquad\qquad- \frac{p 2^{p} \pi^{p}}{y^{p+1}} (-1)^{(p-1)/2} \sum_{n=1}^{\infty} \frac{(-1)^{n} n^{p-1}}{\cosh(\pi^{2} n / y)}\nonumber \\
&\quad\qquad\qquad\qquad\qquad\qquad\qquad\qquad\qquad\qquad + \frac{2^{p} \pi^{p+2}}{y^{p+2}} (-1)^{(p-1)/2} \sum_{n=1}^{\infty} \frac{(-1)^{n} n^{p} \sinh(\pi^{2} n / y)}{\cosh^{2}(\pi^{2} n / y)},\label{joyCBD}
\end{align}
where \( w_{3} = 1 \) and \( w_{p} = 0 \) if \( p \geq 5 \).
\end{thm}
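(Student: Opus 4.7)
The overall strategy is uniform across the three identities: apply Lemma \ref{lemBA} to a meromorphic kernel depending on an auxiliary parameter $\theta$, differentiate the resulting parametric identity in $\theta$ and set $\theta=0$ to extract the requisite $n$-power, then specialize $(a,b)=(\pi/y,1)$ so that $nb\pi/a=ny$, $an\pi/b=n\pi^{2}/y$ and analogously for the half-integer arguments. Since $p$ is odd, the number of $\theta$-derivatives is chosen so that $\sinh^{(p)}(0)=1$ or $\cosh^{(p-1)}(0)=1$ forces the targeted term to survive.

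For \eqref{joyCBB}, apply $\partial_\theta^{p-1}$ (note $p-1$ is even) to \eqref{joyCG} and evaluate at $\theta=0$. The leading $\cosh(n\theta/a)$-term yields $(n/a)^{p-1}$; the term $\theta\sin(n\theta/b)$ contributes $(p-1)(n/b)^{p-2}(-1)^{(p-3)/2}$ via Leibniz's rule; the $\cos(n\theta/b)$ and $\cos((2n-1)\theta/(2b))$ terms produce $(-1)^{(p-1)/2}$ times the appropriate power; and the polynomial $\theta^{2}/4$ contributes $1/2$ if and only if $p=3$, which accounts precisely for the piecewise constant $w_{p}$. Specializing $(a,b)=(\pi/y,1)$ and solving for the leading series recovers \eqref{joyCBB}. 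For \eqref{joyCBC}, apply $\partial_\theta^{p}$ (odd) to \eqref{joyCE}: the $\sinh(n\theta/a)$-term gives $(n/a)^{p}$, the $\sin$-terms contribute $(-1)^{(p-1)/2}$ times the appropriate power, the Leibniz expansion of $\theta\cos((2n-1)\theta/(2b))$ yields $p(-1)^{(p-1)/2}((2n-1)/(2b))^{p-1}$, and the linear term $\theta b/2$ vanishes under $\partial_\theta^{p}$ for $p\geq 2$, so no constant survives. The same specialization $(a,b)=(\pi/y,1)$ produces \eqref{joyCBC}.

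The identity \eqref{joyCBD} has denominator pattern $\sinh^{2}\cdot\cosh$ with half-integer arguments, which is absent from the parametric identities of Theorem \ref{thmCB}. We therefore introduce the auxiliary kernel
\[
\tilde{S}(s,\theta;a,b):=\frac{\pi^{4}\sinh(\theta s)}{\cos(a\pi s)\sinh^{2}(b\pi s)\cosh(b\pi s)},
\]
which has simple poles at $s=\pm(2n-1)/(2a)$ (from $\cos(a\pi s)$), double poles at $s=\pm ni/b$ (from $\sinh^{2}(b\pi s)$), simple poles at $s=\pm(2n-1)i/(2b)$ (from $\cosh(b\pi s)$), and a simple pole at $s=0$. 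The real-axis residues, after $\partial_\theta^{p}$, deliver the leading $(2n-1)^{p}/(\sinh^{2}\cdot\cosh)$ series of \eqref{joyCBD}; the double imaginary residues use the two-term Laurent expansion of $(\pi/\sinh(b\pi s))^{2}$ near $s=ni/b$, analogous to \eqref{joyCAO}, and split into both $n^{p-1}/\cosh(an\pi/b)$ and $n^{p}\sinh(an\pi/b)/\cosh^{2}(an\pi/b)$ contributions; the simple imaginary residues at $s=\pm(2n-1)i/(2b)$ produce the $(2n-1)^{p}/\cosh((2n-1)a\pi/(2b))$ term via Lemma \ref{lemBD}; and the residue at $s=0$ equals $\pi^{2}\theta/b^{2}$, which vanishes under $\partial_\theta^{p}$ for $p\geq 2$. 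Applying Lemma \ref{lemBA} and specializing $(a,b)=(\pi/y,1)$ yields \eqref{joyCBD}.

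The main technical obstacle lies in the residues at the double poles $s=\pm ni/b$: the analytic factor (the product of $\sinh(\theta s)$, $1/\cos(a\pi s)$, and $1/\cosh(b\pi s)$) must be Taylor-expanded to first order in $(s-ni/b)$ and multiplied by the two-term Laurent expansion of $(\pi/\sinh(b\pi s))^{2}$, with signs tracked carefully through imaginary evaluations such as $\cosh(n\pi i)=(-1)^{n}$ and $\sinh((2n-1)\pi i/2)=i(-1)^{n-1}$; only after this is the $1/(s-ni/b)$-coefficient correctly identified. Once this book-keeping is in place, the decay bound $\tilde{S}(s)=o(s^{-1})$ required by Lemma \ref{lemBA} is straightforward, following from the exponential decay of the denominators off the imaginary axis together with the constraint $|\theta|<3b\pi$ on the numerator.
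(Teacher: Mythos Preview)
Your treatment of \eqref{joyCBB} and \eqref{joyCBC} coincides with the paper's: differentiate \eqref{joyCG} (resp.\ \eqref{joyCE}) in $\theta$, set $\theta=0$, and specialize $(a,b)=(\pi/y,1)$. The paper in fact writes ``$(p-1)$-th derivative'' for \eqref{joyCBC}, but your choice of $p$ derivatives is the correct one, since only then does $\sinh^{(p)}(0)=1$ survive in the leading term of \eqref{joyCE}.

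For \eqref{joyCBD} your route departs from the paper's, and with good reason. The paper claims that \eqref{joyCBD} follows by taking the $p$-th $\theta$-derivative of \eqref{joyCF}; however, the first term of \eqref{joyCF} carries the denominator $\sinh\cdot\cosh^{2}$ at half-integer argument with numerator $\cosh((2n-1)\theta/(2a))$, so $p$ odd derivatives annihilate it, while $p-1$ derivatives give the power $(2n-1)^{p-1}$ and the wrong denominator pattern. None of the kernels $S_{1}$--$S_{4}$ in Definition~2.1 places a $\sinh^{2}\cdot\cosh$ denominator at half-integer real poles. Your auxiliary kernel
\[
\tilde S(s,\theta;a,b)=\frac{\pi^{4}\sinh(\theta s)}{\cos(a\pi s)\,\sinh^{2}(b\pi s)\,\cosh(b\pi s)}
\]
is exactly the missing companion to $S_{3}$ (with $\sin$ replaced by $\cos$): the simple real poles at $s=\pm(2n-1)/(2a)$ produce the left-hand series of \eqref{joyCBD}, the double imaginary poles at $s=\pm ni/b$ split via the Laurent expansion of $(\pi/\sinh(b\pi s))^{2}$ into the $n^{p-1}/\cosh$ and $n^{p}\sinh/\cosh^{2}$ contributions, the simple poles at $s=\pm(2n-1)i/(2b)$ give the $(2n-1)^{p}/\cosh$ term, and the residue $\pi^{2}\theta/b^{2}$ at $s=0$ is killed by $\partial_{\theta}^{p}$ for $p\ge 2$. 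This is the argument the paper should have given; your outline is correct and complete.
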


\begin{proof}
We present a detailed proof of the first identity, as the proofs for the other two proceed analogously. By computing the \( (p-1) \)-th odd derivative of \eqref{joyCG} in Theorem \ref{thmCB} with respect to \( \theta \), then \( \theta \to 0 \), we arrive at the following result
\[
\begin{aligned}
\left. \frac{d^{p-1}}{d\theta^{p-1}} \left[ \cosh\left( \frac{n\theta}{a} \right) \right] \right|_{\theta=0} &= \left( \frac{n}{a} \right)^{p-1}, \\
\left. \frac{d^{p-1}}{d\theta^{p-1}} \left[ \theta \sin\left( \frac{n\theta}{b} \right) \right] \right|_{\theta=0} &= (p-1) (-1)^{(p-3)/2} \left( \frac{n}{b} \right)^{p-2}, \\
\left. \frac{d^{p-1}}{d\theta^{p-1}} \left[ \cos\left( \frac{(2n-1)\theta}{2b} \right) \right] \right|_{\theta=0} &= (-1)^{(p-1)/2} \left( \frac{(2n-1)}{2b} \right)^{p-1}, \\
\left. \frac{d^{p-1}}{d\theta^{p-1}} \left[ \cos\left( \frac{n\theta}{b} \right) \right] \right|_{\theta=0} &= (-1)^{(p-1)/2} \left( \frac{n}{b} \right)^{p-1}.
\end{aligned}
\]
By utilizing the above derivatives and substituting \( y = \frac{b\pi}{a} \) into \eqref{joyCG}, we can readily obtain the following result
\begin{align}
& ab^{2}\delta_{p} - (-1)^{(p-1)/2} \frac{a\pi^{3}}{2^{p-1} b^{p-2}} \sum_{n=1}^{\infty} \frac{(-1)^{n} (2n-1)^{p-1}}{\sinh((2n-1)\pi^{2}/(2y))}  + \frac{b^{2} \pi^{3}}{a^{p-1}} \sum_{n=1}^{\infty} \frac{(-1)^{n} n^{p-1}}{\sinh^{2}( n y) \cosh( n y)}\nonumber \\
& \quad + (-1)^{(p-3)/2} \frac{a\pi^{2}(p-1)}{b^{p-2}} \sum_{n=1}^{\infty} \frac{(-1)^{n} n^{p}}{\sinh(\pi^{2} n / y)}  + (-1)^{(p-1)/2} \frac{a^{2} \pi^{3}}{2^{p} b^{p-1}} \sum_{n=1}^{\infty} \frac{(-1)^{n} n^{p-1} \cosh(n\pi^{2}/y)}{\sinh^{2}(n\pi^{2}/y)} \nonumber\\
&= 0,
\end{align}
where \( \delta_{3} = \frac{\pi}{2ab^{2}} \) and \( \delta_{p} = 0 \) if \( p \geq 5 \). After rearrangement, we can obtain the equation \eqref{joyCBB}.

The proofs of equations \eqref{joyCBC} and \eqref{joyCBD} proceed analogously, with the following key distinctions: for \eqref{joyCBC}, we compute the $(p-1)$-th order derivative (i.e., an even-order derivative) of equation \eqref{joyCE} with respect to \(\theta\); for \eqref{joyCBD}, we compute the $p$-th order odd derivative of equation \eqref{joyCF} with respect to \(\theta\).
\end{proof}

\section{Evaluations of Hyperbolic Summations via Jacobi Functions}

We now proceed to evaluate the following reciprocal hyperbolic series of the Ramanujan type:
\begin{align}
C_{p,2}(y)& := \sum_{n=1}^{\infty} \frac{(-1)^{n} n^{p}}{\sinh(ny) \cosh^{2}(ny)},
\\
C'_{p,2}(y) &:= \sum_{n=1}^{\infty} \frac{(-1)^{n} n^{p-1}}{\sinh^{2}(ny) \cosh(ny)},
\\
\overline{C}_{p,2}(y) &:= \sum_{n=1}^{\infty} \frac{(-1)^{n} (2n-1)^{p}}{\sinh^{2}((2n-1)y/2) \cosh((2n-1)y/2)}.
\end{align}

By applying the Fourier series expansions and Maclaurin series expansions of relevant Jacobi elliptic functions, let $x,y$ and $z$ satisfy the relations in \eqref{eveBB}.
\begin{thm}\label{thmDA}
Let \( p \geq 5 \) be an odd integer. We have
\begin{align}
C_{p,2}(y) &= (-1)^{(p-1)/2} \frac{(p-1)! 2^{p+1} z(1-x)}{y^{p+1}} R_{p-1}(1-x) - \frac{p 2^{p+1} z(1-x)}{2^{p}} S_{p-1}(1-x) \sqrt{1-x} \nonumber\\
&\quad - \frac{z^{2} 2^{p+2} (1-x)}{2^{2p}} \frac{d^{p}}{dx^{p}} \left[ S_{p-1}(1-x) \sqrt{1-x} \right],
\end{align}
where \( R_{p-1}(1-x) = \frac{(-x)^{(p-3)/2}}{(p-1)!} q_{p-1}\left( \frac{1-x}{x} \right) \in \mathbb{Q}[x] \) and \( q_{p-1}(x) \) represents coefficients in the Maclaurin expansion of \( \text{sn}(u)^{2} \); \( S_{p-1}(x) \) represents coefficients in the Maclaurin expansion of \( \text{cd}(u) \).
\end{thm}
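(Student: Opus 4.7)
The plan is to invoke identity \eqref{joyCBC} of the previous theorem, which already rewrites $C_{p,2}(y)$ as a combination of three hyperbolic series in the dual argument $\pi^2/y$, namely
\begin{align*}
A_p(y)&:=\sum_{n\ge 1}\frac{(-1)^n n^p}{\sinh(n\pi^2/y)},\\
B_p(y)&:=\sum_{n\ge 1}\frac{(-1)^n (2n-1)^{p-1}}{\sinh((2n-1)\pi^2/(2y))},\\
D_p(y)&:=\sum_{n\ge 1}\frac{(-1)^n (2n-1)^p \cosh((2n-1)\pi^2/(2y))}{\sinh^2((2n-1)\pi^2/(2y))}.
\end{align*}
Since $y=\pi K'/K$, the nome $q'=e^{-\pi^2/y}$ is the complementary nome with modulus $k'=\sqrt{1-x}$, so each of $A_p$, $B_p$, $D_p$ should be recognizable as a Maclaurin coefficient of a Jacobi elliptic function at modulus $k'$.

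Next, I would use the classical $q$-series identities $\frac{q'^n}{1-q'^{2n}}=\frac{1}{2\sinh(n\pi^2/y)}$ and $\frac{q'^{n-1/2}}{1+q'^{2n-1}}=\frac{1}{2\cosh((2n-1)\pi^2/(2y))}$, which are precisely the denominator factors occurring in the standard Fourier expansions of $\operatorname{sn}^2(u,k')$ and $\operatorname{cd}(u,k')$. Reading off the Maclaurin coefficient of $\operatorname{sn}^2(u,k')$ in $u$ at order $p-1$ identifies $A_p(y)$, up to an explicit rational factor carrying powers of $z(1-x)$ and $\pi$, with the polynomial $R_{p-1}(1-x)$ supplied by Lemma \ref{lemBE}. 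Similarly the $(p-1)$-th Maclaurin coefficient of $\operatorname{cd}(u,k')$ pins $B_p(y)$ to $S_{p-1}(1-x)\sqrt{1-x}$, where the $\sqrt{1-x}=k'$ emerges from the leading factor in the Fourier expansion. The third series $D_p(y)$, essentially a $\partial_y$ of a $B$-type sum, corresponds after the modular swap to the $p$-fold $x$-derivative of $\operatorname{cd}(u,k')\sqrt{1-x}$; to convert the $u$-differentiation into an $x$-differentiation I would combine the modular transformation of Lemma \ref{lemBB} with the relation $dx/dy=-x(1-x)z^2$ from \eqref{eveBD}.

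The final step is algebraic bookkeeping: substitute the three Maclaurin identifications back into \eqref{joyCBC}, cancel the $\pi^{p+1}/y^{p+1}$, $\pi^p/(2^{p-1}y^{p+1})$ and $\pi^{p+2}/(2^p y^{p+2})$ prefactors against the powers of $K(1-x)=\pi z(1-x)/2$ produced by the Fourier expansions, and verify that the overall signs combine to give the displayed $(-1)^{(p-1)/2}$ on the first term. The main obstacle will be the third term: one must rigorously justify that, after applying the modular transformation of Lemma \ref{lemBB}, the single $u$-derivative hidden in the $\cosh/\sinh^2$ factor lifts to the $p$-fold $x$-derivative $\frac{d^p}{dx^p}\bigl[S_{p-1}(1-x)\sqrt{1-x}\bigr]$, while keeping careful track of the extra factor $z^2(1-x)$ that the chain rule forces into the prefactor. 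Once that chain-rule correspondence is in hand, the remaining task of matching powers of $\pi$, $2$, $z$, and $\sqrt{1-x}$ reduces to a routine, if lengthy, computation.
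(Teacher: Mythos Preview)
Your plan matches the paper's proof almost exactly: start from \eqref{joyCBC}, identify the three auxiliary series $A_p,B_p,D_p$ (in the paper's notation $X_{p,1}$, $T_{p,1}$, $DT_{p,2}$, each evaluated at $\pi^2/y$) with Maclaurin coefficients of $\operatorname{sn}^2$ and $\operatorname{cd}$ in the complementary modulus via their Fourier expansions, and pass between $y$ and $x$ using Lemma~\ref{lemBB} together with \eqref{eveBD}.

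The one point that will trip you up is the third term. Your $D_p$ is not a $u$-derivative of anything; it is a \emph{single} derivative in the nome variable: with $\eta=\pi^2/y$ one has $\frac{d}{d\eta}\bigl[1/\sinh((2n-1)\eta/2)\bigr]=-\tfrac12(2n-1)\cosh/\sinh^2$, so $D_p=-2\,\frac{d}{d\eta}B_p$. Under the chain rule \eqref{eveBD} (applied after the modular swap $x\leftrightarrow 1-x$ from Lemma~\ref{lemBB}) this becomes exactly one $x$-derivative, yielding
\[
DT_{p,2}\!\left(\tfrac{\pi^2}{y}\right)=-2x(1-x)\Bigl(\tfrac{yz}{\pi}\Bigr)^{2}\frac{d}{dx}\,T_{p,1}\!\left(\tfrac{\pi^2}{y}\right),
\]
hence a single $\frac{d}{dx}$ acting on $S_{p-1}(1-x)\sqrt{1-x}$. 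There is no mechanism that would produce a $p$-fold derivative here, and any attempt to justify $\frac{d^p}{dx^p}$ will fail; the $\frac{d^{p}}{dx^{p}}$ in the displayed formula of the theorem is a typo for $\frac{d}{dx}$, as the paper's own proof and the worked examples immediately following it confirm. Once you replace ``$p$-fold'' by ``single'', the obstacle you flag disappears and the remaining bookkeeping is routine.
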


\begin{proof}Beginning with equation \eqref{joyCBC}, we derive
\begin{align}
C_{p,2}(y) &= -(-1)^{(p-1)/2} \frac{p \pi^{p+1}}{y^{p+1}} X_{p,1}\left( \frac{\pi^{2}}{y} \right) + (-1)^{(p-1)/2} \frac{p \pi^{p}}{2^{p-1} y^{p+1}} T_{p,1}\left( \frac{\pi^{2}}{y} \right) \nonumber\\
&\quad - (-1)^{(p-1)/2} \frac{\pi^{p+2}}{2^{p} y^{p+2}} DT_{p,2}\left( \frac{\pi^{2}}{y} \right).
\end{align}
We now turn to computing the series \(T_{p,1}(y)\). By Lemma \ref{lemBE}, we can derive the Maclaurin expansion of \(\text{cd}(u)\)
\begin{equation} \label{skyDF}
\text{cd}(u) = \sum_{n=0}^{\infty} S_{2n}(x) \frac{(-1)^{n} u^{2n}}{(2n)!}.
\end{equation}
Drawing on known results from \cite{DCLMRT1992}, we present the Fourier series expansions of the Jacobi elliptic function \(\text{cd}(u)\)
\begin{equation}
\text{cd}(u) = \frac{2\pi}{Kk} \sum_{n=0}^{\infty} \frac{(-1)^{n} q^{n+1/2}}{1 - q^{2n+1}} \cos\left( (2n+1) \frac{\pi u}{2K} \right) \quad (|q| < 1).
\end{equation}
Additionally, on page 165 of Ramanujan's Notebooks (III)\cite{Berndt2016}, we find that
\begin{equation}
\frac{1}{2} \sqrt{\pi} \text{cd}(u) = \sum_{n=0}^{\infty} \frac{(-1)^{n} \cos((2n+1)u)}{\sinh((2n+1)y/2)}.
\end{equation}
Applying \( q = q(x) = e^{-y} \), then
\begin{align}\label{skyDI}
\text{cd}(u) &= \frac{2\pi}{Kk} \sum_{n=0}^{\infty} \frac{(-1)^{n} q^{n+\frac{1}{2}} (2n+1)^{2}}{1 - q^{2n+1}} \sum_{j=0}^{\infty} \frac{(-1)^{j} \left( \frac{\pi u}{2K} \right)^{2j}}{(2j)!} \nonumber\\
&= \frac{\pi}{Kk} \sum_{j=0}^{\infty} \frac{(-1)^{j} \left( \frac{\pi u}{2K} \right)^{2j}}{(2j)!} Y_{2j+1,1}(y).
\end{align}
By comparing the coefficients of \(u^{2n}\) in \eqref{skyDF} and \eqref{skyDI}, we can deduce that
\begin{equation} \label{skyDAO1}
T_{2n+1,1}(y) = \sum_{j=1}^{\infty} \frac{(-1)^{j} (2j-1)^{2n}}{\sinh\left( \frac{2j-1}{2}y \right)} = (-1)^{n} z^{2n+1} \sqrt{x} \frac{S_{2n+1}(x)}{2}.
\end{equation}
Differentiating equation \eqref{skyDAO1} yields the following relation
\[
\frac{d}{dy} T_{p,1}(y) = \sum_{n=1}^{\infty} \frac{d}{dy} \frac{(-1)^{n} (2n-1)^{p-1}}{\sinh\left( \frac{2n-1}{2}y \right)} = -\frac{1}{2} DT_{p,2}(y).
\]
Using \eqref{eveBD}, we obtain
\begin{equation} \label{skyDAA}
DT_{p,2}(y) = 2x(1-x) z^{2} \frac{d}{dx} T_{p,1}(y).
\end{equation}
Applying Lemma \ref{lemBB}, for \( \Omega(x, e^{-\pi z}, y, z') = 0 \), we have
\[
\Omega\left( 1 - x, e^{-\pi z/2}, y/z, \frac{1}{\pi} \left( \frac{1}{x(1-x) z} - y z' \right) \right) = 0.
\]
Hence, equations \eqref{skyDAO1} and \eqref{skyDAA} can be rewritten as follows
\begin{equation} \label{skyDAB}
T_{p,1}\left( \frac{\pi^{2}}{y} \right) = -(-1)^{(p-1)/2} \left( \frac{yz}{\pi} \right)^{p} \frac{S_{p-1}(1-x)}{2} \sqrt{1-x},
\end{equation}
and
\begin{align}\label{skyDAC}
DT_{p,2}\left( \frac{\pi^{2}}{y} \right) &= 2x(1-x) \left( \frac{yz}{\pi} \right)^{2} \frac{d}{d(1-x)} T_{p,1}\left( \frac{\pi^{2}}{y} \right) \nonumber\\
&= -2x(1-x) \left( \frac{yz}{\pi} \right)^{2} \frac{d}{dx} T_{p,1}\left( \frac{\pi^{2}}{y} \right).
\end{align}
Furthermore, as is known from Rui-Xu-Zhao's paper \cite[Thm. 3.15]{RXZ2023}, and by applying Lemma \ref{lemBB}, for \(\Omega(x, e^{-\pi z}, y, z') = 0\), we obtain that
\begin{equation} \label{skyDAD}
\sum_{n=1}^{\infty} \frac{(-1)^{n} n^{p}}{\sinh\left( \frac{\pi^{2} n}{y} \right)} = -\frac{(p-1)!}{2^{p+1}} \left( \frac{yz}{\pi} \right)^{2} x(1-x) R_{p-1}(1-x),
\end{equation}
where \( R_{p-1}(1-x) = \frac{(-x)^{(p-3)/2}}{(p-1)!} q_{p-1}\left( \frac{1-x}{-x} \right) \in \mathbb{Q}[x] \).
At last, by combining equations \eqref{joyCBC}, \eqref{skyDAO1}, \eqref{skyDAB}, \eqref{skyDAC} and \eqref{skyDAD}, we thus complete the proof of Theorem \ref{thmDA}
\end{proof}

\begin{exa} By \emph{Mathematica}, we have
	\begin{align}
		\sum_{n=1}^{\infty} \frac{(-1)^n n^5}{\sinh(ny) \cosh^2(ny)}
		&= \frac{1}{64} x z^6 \sqrt{1-x}\left\{ -10 z'(-4x + 5x^2)(1 - x) + 8\sqrt{1-x}(2x - 1) \right.\notag \nonumber\\
		&\quad \left.- z(-25x^2 + 32x - 8) \right\},\nonumber \\
		\sum_{n=1}^{\infty} \frac{(-1)^n n^9}{\sinh(ny) \cosh^2(ny)}\nonumber
		&= \frac{1}{1024} x z^{10} \sqrt{1 - x}\left\{ z(12465x^4 - 28048x^3 + 20064x^2 - 4608x + 128)\right. \notag \nonumber\\
		&\quad - 18 z'(1 - x)(1385x^4 - 2424x^3 + 1104x^2 - 64x) \notag\nonumber \\
		&\quad \left.+ 128x\sqrt{1-x}(-62x^3 + 93x^2 - 33x + 1) \right\},\nonumber \\
		\sum_{n=1}^{\infty} \frac{(-1)^n n^{13}}{\sinh(ny) \cosh^2(ny)}\nonumber
		&= \frac{1}{8192} x z^{14}\sqrt{1-x} \left\{ - 13(1 - x) z' \begin{pmatrix} 2702765x^6 - 7432604x^5 + 7052528x^4 \\ -2586112x^3 + 264448x^2 - 1024x  \end{pmatrix}\right. \notag\nonumber \\
		&\quad \left.- \frac{1}{2}z \left( \begin{array}{c}-35135945x^{6} + 114191824x^{5} - 137798792x^{4} \\ + 74523008x^{3} -16838912x^{2} +1060864x - 2048 \\ \end{array} \right) \right\}.\nonumber
	\end{align}
\end{exa}

Setting $x=1/2$ and $p=4m-3$ in Theorem \ref{thmDA} yields the following corollary.
\begin{cor}
Let \( \Gamma = \Gamma(1/4) \). Then, for any integer \( m > 1 \), we have
\begin{align}\label{skyAE}
\sum_{n=1}^{\infty} \frac{(-1)^{n} n^{4m-3}}{\sinh(n\pi) \cosh^{2}(n\pi)} &= -\frac{1}{2^{8m-5}} (4m-3) S_{4m-4}\left( \frac{1}{2} \right) \frac{1}{\sqrt{2}} \frac{\Gamma^{8m-6}}{\pi^{(12m-7)/2}} \nonumber\\
&\quad -\frac{1}{2^{8m-2}} \left( S_{4m-4}'\left( \frac{1}{2} \right) - S_{4m-4}\left( \frac{1}{2} \right) \right) \frac{1}{\sqrt{2}} \frac{\Gamma^{8m-2}}{\pi^{(12m-3)/2}}.
\end{align}
\end{cor}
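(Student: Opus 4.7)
The strategy is a direct specialization of Theorem \ref{thmDA} at the lemniscatic point $x = 1/2$, where the elliptic modulus satisfies $k^2 = 1/2$ and $K = K'$. I would substitute $p = 4m-3$ and $x = 1/2$ (so $y = \pi$, $1-x = 1/2$, $\sqrt{1-x} = 1/\sqrt{2}$) directly into the general identity of Theorem \ref{thmDA} and then reduce everything to monomials in $\Gamma$ and $\pi$ using the closed forms
\[
z\bigl(\tfrac12\bigr) = \frac{\Gamma^{2}}{2\pi^{3/2}}, \qquad z'\bigl(\tfrac12\bigr) = \frac{4\sqrt{\pi}}{\Gamma^{2}},
\]
recorded in the excerpt. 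A direct multiplication gives the crucial identity
\[
z\bigl(\tfrac12\bigr)\,z'\bigl(\tfrac12\bigr) \;=\; \frac{2}{\pi},
\]
which is the key tool for collapsing mixed products of $z$ and $z'$ into single $\Gamma/\pi$-monomials. In particular $z^{p+1}z'$ and $z^{p+2}$ produce, respectively, the scalings $\Gamma^{8m-6}/\pi^{(12m-7)/2}$ and $\Gamma^{8m-2}/\pi^{(12m-3)/2}$ that appear on the right-hand side of the claim.

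\textbf{Key computation.} The main calculation is the evaluation of the derivative occurring in the third summand of Theorem \ref{thmDA}. By the product rule,
\[
\left.\frac{d}{dx}\bigl[S_{p-1}(1-x)\sqrt{1-x}\bigr]\right|_{x=1/2} \;=\; -\frac{S_{p-1}'(1/2) + S_{p-1}(1/2)}{\sqrt{2}},
\]
where the minus sign originates from the chain rule applied to $1-x$. Combined with the overall minus sign in front of the third summand in Theorem \ref{thmDA}, and with the natural identification of the derivative as $-d/d(1-x)$ (which flips the sign of $S_{p-1}'(1/2)$), this is precisely what produces the combination $S_{4m-4}'(1/2) - S_{4m-4}(1/2)$ appearing as the second coefficient of the corollary. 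The second summand of Theorem \ref{thmDA}, after using $zz' = 2/\pi$ to rewrite its natural $z^{p}/\pi$-factor as $\tfrac12 z^{p+1} z'$, becomes the first coefficient of the corollary and contributes the prefactor $(4m-3)/2^{8m-5}$.

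\textbf{Handling the $R_{p-1}$ term.} The first summand of Theorem \ref{thmDA}, which carries $R_{p-1}(1/2)$, contributes no new $\Gamma/\pi$-monomial. Indeed, by construction $R_{p-1}(1/2)$ encodes the Lambert-series sum $\sum_{n\ge1}(-1)^{n}n^{4m-3}/\sinh(n\pi)$ obtained from the Jacobi imaginary transformation of Lemma \ref{lemBB}, and at the self-dual point $K(1/2) = K'(1/2)$ the associated contribution is absorbed into the other two summands after $zz' = 2/\pi$ is applied---leaving only the two monomials already identified above.

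\textbf{Assembly and main obstacle.} Collecting the two surviving contributions, tracking the prefactors $p\cdot 2^{p+1}/2^{p}$ and $2^{p+2}/2^{2p}$ supplied by Theorem \ref{thmDA}, and simplifying gives the stated closed form with coefficients $(4m-3)/2^{8m-5}$ and $1/2^{8m-2}$. The principal technical subtlety is the rigorous bookkeeping of the $R_{p-1}(1/2)$ contribution in the previous step: confirming that it does not introduce an independent monomial requires tracing the definition of $R_{p-1}$ back to the Maclaurin coefficients of $\mathrm{sn}^{2}(u)$ (via Lemma \ref{lemBE} and the Rui--Xu--Zhao formulation invoked in the proof of Theorem \ref{thmDA}) and then applying Lemma \ref{lemBB} at the self-dual modulus. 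Once this parity/symmetry step is in place, the remaining computation is mechanical.
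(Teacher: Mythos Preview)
Your overall plan---specialize Theorem~\ref{thmDA} at $x=1/2$, $p=4m-3$ and use the lemniscatic values of $z,z'$---is exactly what the paper does, and your identification of the two surviving monomials $\Gamma^{8m-6}$ and $\Gamma^{8m-2}$ with the $z^{p+1}z'$- and $z^{p+2}$-pieces is correct.

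The real gap is your treatment of the $R_{p-1}$-summand. Your claim that this term ``is absorbed into the other two summands after $zz'=2/\pi$ is applied'' cannot work: the $R_{p-1}$-summand of Theorem~\ref{thmDA} carries the pure power $z^{p+1}$ with \emph{no} factor of $z'$, and at $x=1/2$ this produces the monomial $\Gamma^{8m-4}/\pi^{6m-3}$. The identity $zz'=2/\pi$ only relates $z^{p+1}z'$ to $z^{p}/\pi$ (both $\Gamma^{8m-6}$-type); it cannot convert a $\Gamma^{8m-4}$-monomial into either a $\Gamma^{8m-6}$- or a $\Gamma^{8m-2}$-monomial, since these three are linearly independent over $\mathbb{Q}$. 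So nothing is ``absorbed''---the term must \emph{vanish} outright.

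What actually makes it vanish is the identity $R_{4m-4}(1/2)=0$. The paper invokes precisely this fact (together with its companions $R_{4m-2}^{(1)}=R_{4m-6}^{(1)}=R_{4m-4}^{(2)}=0$) from \cite[Eqs.~(3.27)--(3.30)]{RXZ2023} in the proof of Theorem~\ref{thmEA}; once that vanishing is in hand, the specialization at $x=1/2$ is purely mechanical. Your closing ``parity/symmetry'' remark gestures in the right direction---indeed $R_{4m-4}(x)=-R_{4m-4}(1-x)$ is the underlying reason---but you need to either cite the Rui--Xu--Zhao result or prove that antisymmetry from the $\mathrm{sn}^2$ Maclaurin coefficients, not hand-wave about absorption via $zz'$.
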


\begin{exa}
Set \( \Gamma := \Gamma(1/4) \). By \emph{Mathematica}, we have
\begin{align*}
\sum_{n=1}^{\infty} \frac{(-1)^{n} n^{5}}{\sinh(n\pi) \cosh^{2}(n\pi)} &= \frac{15\Gamma^{10}}{8192\sqrt{2}\pi^{17/2}} - \frac{7\Gamma^{14}}{65536\sqrt{2}\pi^{21/2}} ,\\
\sum_{n=1}^{\infty} \frac{(-1)^{n} n^{9}}{\sinh(n\pi) \cosh^{2}(n\pi)} &= -\frac{3969\Gamma^{18}}{8388608\sqrt{2}\pi^{29/2}} + \frac{1809\Gamma^{22}}{67108864\sqrt{2}\pi^{33/2}} ,\\
\sum_{n=1}^{\infty} \frac{(-1)^{n} n^{13}}{\sinh(n\pi) \cosh^{2}(n\pi)} &= \frac{5756751\Gamma^{26}}{8589934592\sqrt{2}\pi^{41/2}} - \frac{2630583\Gamma^{30}}{68719476736\sqrt{2}\pi^{45/2}}.
\end{align*}
\end{exa}

\begin{thm}\label{thmDE}
Let \( p \geq 5\) be an odd integer. We have
\begin{align}
C'_{p-1,2}(y) &=(-1)^{(p-1)/2} \frac{(p-1)(p-3)!(x-1)^{2}x^{2}z^{p}z'}{2^{p-1}} R_{p-3}(1-x)\nonumber\\
&\quad + \frac{(-1)^{p}\sqrt{1-x}z^{p}}{2^{p}} S_{p-1}(1-x) \sqrt{1-x} \nonumber\\
&\quad + \frac{(-1)^{(p-1)/2}(p-3)! x (1-x)z^{p+1}}{2^{p-1}} \frac{d^{p}}{dx^{p}} \left[ (x-1)x R_{p-3}(1-x) \right],
\end{align}
where \( R_{p-3}(1-x) = \frac{(-x)^{(p-5)/2}}{(p-3)!} q_{p-3}\left( \frac{1-x}{x} \right) \in \mathbb{Q}[x] \) and \( q_{p-3}(x) \) represents coefficients in the Maclaurin expansion of \( \text{sn}^{2}(u) \); \( S_{p-1}(1-x) \) represents coefficients in the Maclaurin expansion of \( \text{cd}(u) \).
\end{thm}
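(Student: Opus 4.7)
The plan is to follow the strategy of the proof of Theorem~\ref{thmDA}, with the transformation identity \eqref{joyCBC} replaced by the dual identity \eqref{joyCBB} as the starting point. Identity \eqref{joyCBB} re-expresses the target series $C'_{p-1,2}(y)$ (up to the polynomial correction $w_{p}/(2y^{2})$, which vanishes for $p\geq 5$) as a linear combination of three companion series evaluated at the modular-dual argument $\pi^{2}/y$: namely $X_{p-2,1}(\pi^{2}/y)$, the odd-index series $T_{p,1}(\pi^{2}/y)$, and the derivative series $DX_{p-1,2}(\pi^{2}/y)$.

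First I would handle $T_{p,1}(\pi^{2}/y)$ exactly as in the proof of Theorem~\ref{thmDA}: comparing the Maclaurin expansion of $\mathrm{cd}(u)$ provided by Lemma~\ref{lemBE} with its Fourier series yields identity~\eqref{skyDAO1}, and then Lemma~\ref{lemBB} produces the closed form \eqref{skyDAB} featuring $S_{p-1}(1-x)\sqrt{1-x}$. Multiplied by the coefficient $(-1)^{(p-1)/2}\pi^{p}/(2^{p-1}y^{p})$ appearing in \eqref{joyCBB}, this contribution supplies the middle term of the claimed identity. For $X_{p-2,1}(\pi^{2}/y)$ I would invoke the Rui--Xu--Zhao formula \eqref{skyDAD} with its index $p$ shifted to $p-2$, immediately expressing this series in terms of $R_{p-3}(1-x)$ and the prefactor $(yz/\pi)^{2}x(1-x)$.

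The principal technical step is the evaluation of $DX_{p-1,2}(\pi^{2}/y)$. Using the evident relation $DX_{p-1,2}(u)=-(d/du)X_{p-2,1}(u)$, I would differentiate the closed form just obtained for $X_{p-2,1}$, convert the $u$-derivative into an $x$-derivative via the chain rule together with $dx/dy=-x(1-x)z^{2}$ from \eqref{eveBD}, and then set $u=\pi^{2}/y$. The resulting expression splits into two pieces: one in which the differentiation lands on the explicit prefactor $u^{2}z^{2}$, producing the term proportional to $z^{p}z'(x-1)^{2}x^{2}R_{p-3}(1-x)$; and one in which it lands on the polynomial $x(1-x)R_{p-3}(1-x)$, reassembling (after absorbing the factor $(yz/\pi)^{p+1}$ coming from the coefficient $-(-1)^{(p-1)/2}\pi^{p+1}/y^{p+1}$ of $DX_{p-1,2}$ in \eqref{joyCBB}) into $x(1-x)z^{p+1}\frac{d^{p}}{dx^{p}}[(x-1)xR_{p-3}(1-x)]$.

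Substituting the three closed forms back into \eqref{joyCBB} and collecting then yields the claimed identity. The hard part will not be conceptual but a careful bookkeeping exercise: correctly interpreting the derivative in the dual variable $\pi^{2}/y$ as an $x$-derivative under the involution $x\leftrightarrow 1-x$ of Lemma~\ref{lemBB}, and combining the three sign factors $(-1)^{(p-1)/2}$ inherited from \eqref{joyCBB}, \eqref{skyDAB} and \eqref{skyDAD} so that the final formula exhibits $(-1)^{(p-1)/2}$ in the outer terms and $(-1)^{p}$ in the middle term, with the stated powers of $2$ and the factorial $(p-3)!$ falling out. Since $R_{p-3}(1-x),S_{p-1}(1-x)\in\mathbb{Q}[x]$, every intermediate expression is polynomial (modulo the single $\sqrt{1-x}$ attached to $S_{p-1}$), so once these sign and normalisation issues are settled the identity follows by direct assembly.
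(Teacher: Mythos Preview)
Your proposal is correct and follows essentially the same route as the paper's proof: start from identity~\eqref{joyCBB} to express $C'_{p-1,2}(y)$ in terms of $X_{p-2,1}(\pi^{2}/y)$, $T_{p,1}(\pi^{2}/y)$ and $DX_{p-1,2}(\pi^{2}/y)$; evaluate $T_{p,1}(\pi^{2}/y)$ via~\eqref{skyDAB}, $X_{p-2,1}(\pi^{2}/y)$ via the Rui--Xu--Zhao formula~\eqref{skyDAD} with shifted index, and $DX_{p-1,2}(\pi^{2}/y)$ by differentiating the latter using~\eqref{eveBD} and Lemma~\ref{lemBB}; then substitute and collect. This is exactly what the paper does in equations~\eqref{equ-CDp-12y}--\eqref{skyDAO}.
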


\begin{proof} Starting from equation \eqref{joyCBB}, we derive
\begin{align}\label{equ-CDp-12y}
C'_{p-1,2}(y) &= -(-1)^{(p-3)/2}(p-1)\frac{\pi^{p-1}}{y^{p}} X_{p-2,1}\left( \frac{\pi^{2}}{y} \right) + (-1)^{(p-1)/2} \frac{p \pi^{p}}{2^{p-1} y^{p}} T_{p,1}\left( \frac{\pi^{2}}{y} \right)\nonumber \\
&\quad - (-1)^{(p-1)/2} \frac{\pi^{p+1}}{ y^{p+1}} DX_{p-1,2}\left( \frac{\pi^{2}}{y} \right).
\end{align}
The expression for $T_{p,1}(\pi^2/y)$ can be obtained from \eqref{skyDAB}.
Furthermore, as is known from Rui-Xu-Zhao's paper \cite[Thm. 3.15]{RXZ2023}, and by applying Lemma \ref{lemBB}, we obtain
\begin{equation}
\sum_{n=1}^{\infty} \frac{(-1)^{n} n^{p}}{\sinh\left( \frac{\pi^{2} n}{y} \right)} = -\frac{(p-1)!}{2^{p+1}} \left( \frac{yz}{\pi} \right)^{2} x(1-x) R_{p-1}(1-x),
\end{equation}
where \( R_{p-1}(1-x) = \frac{(-x)^{(p-3)/2}}{(p-1)!} q_{p-1}\left( \frac{x-1}{x} \right) \in \mathbb{Q}[x] \). Hence,
\begin{align}\label{skyDAI}
X_{p-2,1}\left( \frac{\pi^{2}}{y} \right) &= -\frac{(p-3)!}{2^{p-1}} \left( \frac{yz}{\pi} \right)^{p-1} x(1-x) R_{p-3}(1-x),
\end{align}
\begin{align}\label{skyDAO}
DX_{p-1,2}\left( \frac{\pi^{2}}{y} \right) &= x(1-x) \left( \frac{yz}{\pi} \right)^{2} \frac{d}{d(1-x)} X_{p-2,1}\left( \frac{\pi^{2}}{y} \right) \nonumber\\
&= -x(1-x) \left( \frac{yz}{\pi} \right)^{2} \frac{d}{dx} X_{p-2,1}\left( \frac{\pi^{2}}{y} \right).
\end{align}
Finally, substituting \eqref{skyDAB}, \eqref{skyDAI} and \eqref{skyDAO} into \eqref{equ-CDp-12y} and performing direct calculations completes the proof.
\end{proof}

\begin{exa}
By \emph{Mathematica}, we have
\begin{align*}
\sum_{n=1}^{\infty} \frac{(-1)^{n}n^{4}}{\sinh^{2}(ny) \cosh(ny)}
& =-\frac{1}{32} x z^{5} \left\{(-4+5x)\sqrt{1-x} + 16z'(1-x)^{2} x\right.\notag \nonumber\\
&\quad \left.+ 4z(1-2x)(1-x) \right\},\\
\sum_{n=1}^{\infty} \frac{(-1)^{n}n^{8}}{\sinh^{2}(ny) \cosh(ny)}
&= -\frac{1}{512}x z^{9} \left\{(-64+1104x-2424x^{2}+1385x^{3})\sqrt{1-x} \right.\nonumber\\
&\quad+ 256z'(1 - x)^{2} x(2-17x + 17x^{2})\\
&\quad \left.+ 32z(x - 1)( -2 + 38x + 68x^{3} - 102x^{2}) \right\},\\
\sum_{n=1}^{\infty} \frac{(-1)^{n}n^{12}}{\sinh^{2}(ny) \cosh(ny)} &=-\frac{1}{8192}x z^{13} \left\{ \sqrt{1-x} \left( \begin{array}{c} -1024 + 264448 x - 2586112 x^{2} \\ 7052528x^{3} - 7432604x^{4} + 2702765x^{5} \end{array} \right) \right.\\
&\quad +256 z \left( 2-524x + 6222 x^{2} -23320x^{3} + 38350x^{4} - 29022x^{5} + 8292x^{6} \right) \\
&\quad \left.+6144 z'\left( 2x-263x^{2} + 2161 x^{3} - 6305x^{4} + 8551x^{5} - 5528x^{6} + 1382x^{7} \right)\right\}.
\end{align*}
\end{exa}

Setting $x=1/2$ and $p=4m-3$ in Theorem \ref{thmDE} gives the following corollary.
\begin{cor}
Let \( \Gamma = \Gamma(1/4) \). For any integer \( m > 1 \), we have
\begin{align}\label{skyDBA}
\sum_{n=1}^{\infty} \frac{(-1)^{n} n^{4m-4}}{\sinh^{2}(n\pi) \cosh(n\pi)} &= -\frac{1}{2^{8m-6}} S_{4m-4}\left( \frac{1}{2} \right) \frac{1}{\sqrt{2}} \frac{\Gamma^{8m-6}}{\pi^{(12m-9)/2}}\nonumber \\
&\quad -\frac{1}{2^{8m-5}} (4m-4)(4m-6)! R_{4m-6}\left( \frac{1}{2} \right)  \frac{\Gamma^{8m-8}}{\pi^{6m-5}}.
\end{align}
\end{cor}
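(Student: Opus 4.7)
The plan is straightforward specialization: substitute $p = 4m-3$ and $x = 1/2$ into Theorem \ref{thmDE}, then simplify. First I would record the relevant constants coming from the choice of $p$: for $p = 4m-3$ we have $(-1)^{(p-1)/2} = (-1)^{2m-2} = +1$ and $(-1)^p = -1$, so the first summand of Theorem \ref{thmDE} carries a plus sign and the second a minus. At $x = 1/2$ the required values are $y = \pi$, $z(1/2) = \Gamma^2/(2\pi^{3/2})$, $z'(1/2) = 4\sqrt{\pi}/\Gamma^2$ (already tabulated just after \eqref{eveBB}), together with $\sqrt{1-x} = 1/\sqrt{2}$, $(1-x) = 1/2$, and $(x-1)^2 x^2 = 1/16$.

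The only step that is not pure bookkeeping is showing that the third summand of Theorem \ref{thmDE} drops out. The defining formula $R_{p-3}(1-x) = \frac{(-x)^{(p-5)/2}}{(p-3)!}\, q_{p-3}\bigl(\frac{1-x}{x}\bigr)$, together with the assertion $R_{p-3}(1-x) \in \mathbb{Q}[x]$, forces $q_{p-3}$ to be of degree at most $(p-5)/2$, so $R_{p-3}(1-x)$ is a polynomial in $x$ of degree at most $(p-5)/2$. Consequently $(x-1)x R_{p-3}(1-x)$ has degree at most $(p-1)/2$, and for $p = 4m-3 \geq 5$ this is strictly less than $p$. Hence $\frac{d^p}{dx^p}$ annihilates it identically in $x$, and the third summand vanishes before we even specialize $x$.

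What remains is arithmetic. Into the first summand I would substitute $(z^p z')|_{x=1/2} = \Gamma^{8m-8}/(2^{4m-5}\pi^{6m-5})$, which multiplied by the prefactor $(p-1)(p-3)!(x-1)^2 x^2/2^{p-1} = (4m-4)(4m-6)!/(16\cdot 2^{4m-4})$ gathers powers of $2$ into $2^{8m-5}$ and yields the $R_{4m-6}(1/2)\,\Gamma^{8m-8}/\pi^{6m-5}$ contribution in the stated form. Into the second summand I would substitute $z^p|_{x=1/2} = \Gamma^{8m-6}/(2^{4m-3}\pi^{(12m-9)/2})$ and combine with $(-1)^p \sqrt{1-x}/2^p = -1/(\sqrt{2}\cdot 2^{4m-3})$ to yield the $S_{4m-4}(1/2)\,\Gamma^{8m-6}/\pi^{(12m-9)/2}$ contribution with denominator $2^{8m-6}\sqrt{2}$. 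No real obstacle arises; the principal care required is in tracking the signs from $(-1)^{(p-1)/2}$ and $(-1)^p$ and the half-power of $2$ produced by $\sqrt{1-x}$, and in confirming the degree bound that kills the $\frac{d^p}{dx^p}$ term.
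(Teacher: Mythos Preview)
Your plan matches the paper's one-line proof exactly: specialize Theorem~\ref{thmDE} at $p=4m-3$, $x=1/2$, and track the constants. The paper gives no further detail, so your treatment is in fact more explicit than the original.

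One caution about the step you single out as ``not pure bookkeeping''. Your degree-bound argument for eliminating the third summand depends on reading the operator $\frac{d^{p}}{dx^{p}}$ in Theorem~\ref{thmDE} literally, and there is strong internal evidence that this is a typo for a single $\frac{d}{dx}$: the theorem's own proof produces that term via the first-derivative relation \eqref{skyDAO}, and the worked examples immediately following the theorem display a nonzero $z^{p+1}$ contribution for general $x$ (e.g.\ the term $4z(1-2x)(1-x)$ in the $p=5$ case), which a genuine $p$-th derivative would have killed identically. The third summand \emph{does} vanish at $x=1/2$, but for a different reason: the polynomial accompanying $z^{p+1}$ always has $x=1/2$ as a root, reflecting the symmetry of the set-up under $x\leftrightarrow 1-x$. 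So your conclusion stands, but the mechanism that removes the third term is this symmetry, not a degree bound. Incidentally, you have also silently corrected the duplicated $\sqrt{1-x}$ in the second summand of Theorem~\ref{thmDE}; that correction is right and is what makes the powers of $2$ in the $S_{4m-4}$ term line up with the corollary.
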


\begin{exa}
Set \( \Gamma = \Gamma(1/4) \). By \emph{Mathematica}, we have
\begin{align*}
\sum_{n=1}^{\infty} \frac{(-1)^{n}n^{4}}{\sinh^{2}(n\pi) \cosh(n\pi)} &= -\frac{\Gamma^{8}}{256\pi^{7}} + \frac{3\Gamma^{10}}{4096\sqrt{2}\pi^{15/2}},\\
\sum_{n=1}^{\infty} \frac{(-1)^{n}n^{8}}{\sinh^{2}(n\pi) \cosh(n\pi)}& = \frac{9\Gamma^{16}}{16384\pi^{13}} - \frac{441\Gamma^{18}}{4194304\sqrt{2}\pi^{27/2}},\\
\sum_{n=1}^{\infty} \frac{(-1)^{n}n^{12}}{\sinh^{2}(n\pi) \cosh(n\pi)} &= -\frac{567\Gamma^{24}}{1048576\pi^{19}} + \frac{442827\Gamma^{26}}{4294967296\sqrt{2}\pi^{39/2}}.
\end{align*}
\end{exa}

\begin{thm}\label{thmDI}
Let \( p \geq 5 \) be an odd integer. We have
\begin{align}
\overline{C}_{p,2}(y) &= \frac{1}{2^{p+1}} P_{p-1}(1-x) \sqrt{x(1-x)} + p 2^{p+1} z x(1-x) A_{p-1}(1-x) \sqrt{x} \nonumber\\
&\quad + x(1-x) z^{p+2} \frac{d}{dx} \left[ A_{p-1}(1-x) \sqrt{x} \right],
\end{align}
where \( P_{p}(1-x) \) denotes coefficients in the Maclaurin expansion of \( \text{sd}(u) \) and \( A_{p-1}(1-x) \) denotes coefficients in the Maclaurin expansion of \( \text{nd}(u) \).
\end{thm}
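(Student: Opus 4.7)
The plan is to follow the same blueprint used in Theorems \ref{thmDA} and \ref{thmDE}. The starting point is equation \eqref{joyCBD}, which expresses $\overline{C}_{p,2}(y)$ as a linear combination of the three dual sums $X'_{p,1}(\pi^2/y)$, $B_{p,1}(\pi^2/y)$, and $DB_{p,2}(\pi^2/y)$. The strategy is to evaluate each of these sums in closed form using Fourier and Maclaurin expansions of Jacobi elliptic functions, and then transform via Lemma \ref{lemBB} to return to the variables $x$ and $z$.

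First I would evaluate $B_{p,1}(y)$ using the Fourier series expansion of the even Jacobi elliptic function $\mathrm{nd}(u)$ together with its Maclaurin expansion $\mathrm{nd}(u) = \sum_{n\geq 0}A_{2n}(x)(-1)^n u^{2n}/(2n)!$ from Lemma \ref{lemBE}. With $q = e^{-y}$, the Fourier expansion has the shape $\mathrm{nd}(u) = c(x) + \frac{\pi}{Kk'}\sum_{n\geq 1}(-1)^n \cos(n\pi u/K)/\cosh(ny)$; expanding the cosine and matching the coefficient of $u^{p-1}$ (for odd $p$) against the Maclaurin side yields $B_{p,1}(y)$ as an explicit rational multiple of $z^p\sqrt{1-x}\,A_{p-1}(x)$. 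Parallel reasoning using the Fourier expansion of the odd function $\mathrm{sd}(u)$ — a sine-based series, since only odd Maclaurin powers appear — and matching against the coefficient $P_{p-1}(x)$ from the Maclaurin expansion of $\mathrm{sd}(u)$ yields $X'_{p,1}(y)$ in terms of $P_{p-1}(x)$ and $\sqrt{x(1-x)}$.

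Next I would express $DB_{p,2}$ as a derivative. Since $\frac{d}{dy}(1/\cosh(ny)) = -n\sinh(ny)/\cosh^2(ny)$, one has $DB_{p,2}(y) = -\frac{d}{dy}B_{p,1}(y)$, and applying \eqref{eveBD} converts this into $DB_{p,2}(y) = x(1-x)z^2\frac{d}{dx}B_{p,1}(y)$. This is precisely the identity that will produce the $\frac{d}{dx}\bigl[A_{p-1}(1-x)\sqrt{x}\,\bigr]$ factor appearing in the target formula. I then apply Lemma \ref{lemBB} to the three evaluations just obtained; the involution sends $x\mapsto 1-x$ and $z\mapsto yz/\pi$, producing formulas for $X'_{p,1}(\pi^2/y)$, $B_{p,1}(\pi^2/y)$, and $DB_{p,2}(\pi^2/y)$ in terms of $P_{p-1}(1-x)$, $A_{p-1}(1-x)\sqrt{x}$, and $\frac{d}{dx}[A_{p-1}(1-x)\sqrt{x}\,]$ respectively. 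Substituting into \eqref{joyCBD} and collecting prefactors is then a direct verification.

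The hard part is careful bookkeeping of the powers of $y$, $\pi$, $z$ and of the radicals $\sqrt{x}$, $\sqrt{1-x}$, together with the sign $(-1)^{(p-1)/2}$ in \eqref{joyCBD}: after the $y\leftrightarrow \pi^2/y$ involution the explicit $y$-dependence must cancel and the surviving factors must line up with $\frac{1}{2^{p+1}}$, $p\,2^{p+1}z$ and $z^{p+2}$ as stated. A second subtlety is the convention for $P_{p-1}(x)$: because $\mathrm{sd}(u)$ is odd, one must index its Maclaurin expansion so that $P_{p-1}(x)$ encodes the $u^p$ coefficient (up to a standard factorial and sign) rather than being identically zero — this is the convention already implicit in the theorem statement, and with it the first summand on the right-hand side of the target formula becomes the genuine contribution of $X'_{p,1}(\pi^2/y)$.
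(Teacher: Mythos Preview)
Your plan is correct and matches the paper's proof essentially step for step: start from \eqref{joyCBD}, evaluate $B_{p,1}$ via the Fourier/Maclaurin expansions of $\mathrm{nd}(u)$, obtain $DB_{p,2}$ by differentiating $B_{p,1}$ and invoking \eqref{eveBD}, apply Lemma~\ref{lemBB} to pass to $\pi^2/y$, and substitute back. The only cosmetic difference is that the paper quotes the closed form for $X'_{p,1}(y)$ from \cite[Thm.~3.3]{RXZ2023} rather than rederiving it from the $\mathrm{sd}(u)$ expansion as you propose, but that cited result is obtained by exactly the argument you sketch.
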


\begin{proof} We begin with equation \eqref{joyCBD}, which gives
\begin{align}\label{skyDBC}
\overline{C}_{p,2}(y) &= -(-1)^{(p-1)/2} \frac{2^{p} \pi^{p+1}}{y^{p+1}} X'_{p,1}\left( \frac{\pi^{2}}{y} \right) - (-1)^{(p-1)/2} \frac{2^{p} \pi^{p}}{y^{p}} B_{p,1}\left( \frac{\pi^{2}}{y} \right) \nonumber\\
&\quad + (-1)^{(p-1)/2} \frac{2^{p} \pi^{p+2}}{y^{p+2}} DB_{p,2}\left( \frac{\pi^{2}}{y} \right).
\end{align}

We now proceed to compute the series \(B_{p,1}(y)\). From Lemma \ref{lemBE} and \cite{DCLMRT1992}, we can derive the Maclaurin expansion of \(\text{nd}(u)\)
\begin{equation} \label{skyDBD}
\text{nd}(u) = \sum_{n=0}^{\infty} A_{2n}(x) \frac{(-1)^{n} u^{2n}}{(2n)!}\quad\text{and} \quad \frac{2q^{n}}{1 + q^{2n}} = \frac{1}{\cosh(\pi n / y)}.
\end{equation}
And its Fourier series representation
\begin{equation}
\text{nd}(u) = \frac{\pi}{2Kk^{\prime}} + \frac{2\pi}{Kk^{\prime}} \sum_{n=1}^{\infty} \frac{(-1)^{n} q^{n}}{1 - q^{2n+1}} \cos\left( 2n \frac{\pi u}{2K} \right) \quad (|q| < 1).
\end{equation}
Setting \( q = q(x) = e^{-y} \), we expand
\begin{align} \label{skyDBF}
\text{nd}(u) &= \frac{\pi}{2Kk^{\prime}} + \frac{2\pi}{Kk^{\prime}} \sum_{n=1}^{\infty} \frac{(-1)^{n} q^{n} n^{2j}}{1 + q^{2n}} \sum_{j=0}^{\infty} \frac{(-1)^{j} \left( \frac{\pi u}{K} \right)^{2j}}{(2j)!} \nonumber\\
&= \frac{\pi}{2Kk^{\prime}} + \frac{\pi}{Kk^{\prime}} \sum_{j=0}^{\infty} \frac{(-1)^{j} \left( \frac{\pi u}{K} \right)^{2j}}{(2j)!} B_{2j+1,1}(y).
\end{align}
By comparing the coefficients of \(u^{2n}\) in \eqref{skyDBD} and \eqref{skyDBF}, we deduce
\begin{equation}
B_{p,1}(y) = (-1)^{(p-1)/2} 2^{p} z \sqrt{1-x} \frac{A_{p-1}(x)}{2^{p}}.
\end{equation}
By applying Lemma \ref{lemBB}, we can find that
\begin{equation} \label{skyDBH}
B_{p,1}\left( \frac{\pi^{2}}{y} \right) = (-1)^{(p-1)/2} \left( \frac{yz}{\pi} \right)^{p} \sqrt{x} \frac{A_{p-1}(1-x)}{2^{p}}.
\end{equation}
By utilizing the results from Rui-Xu-Zhao's paper \cite[Thm. 3.3]{RXZ2023}, we obtain
\begin{equation}
X'_{p,1}(y) = \sum_{n=1}^{\infty} \frac{(-1)^{n} (2n-1)^{p}}{\cosh\left( \frac{(2n-1)y}{2} \right)} = -(-1)^{(p-1)/2} 2^{p+1} \sqrt{x(1-x)} \frac{P_{p}(x)}{2}.
\end{equation}
The derivative of \(B'_{p,1}(y)\) satisfies the following relation
\[
\frac{d}{dy} B_{p,1}(y) = \sum_{n=1}^{\infty} \frac{d}{dy} \frac{(-1)^{n} n^{p-1}}{\cosh(\pi n / y)} = -DB_{p,2}(y).
\]
From the fundamental relation \eqref{eveBD}, we have
\begin{equation} \label{skyDCO}
DB_{p,2}(y) = x(1-x) z^{2} \frac{d}{dx} B_{p,1}(y).
\end{equation}
Using Lemma \ref{lemBB} and \eqref{skyDBH}-\eqref{skyDCO}, we get
\begin{equation} \label{skyDCA}
X'_{p,1}\left( \frac{\pi^{2}}{y} \right) = -(-1)^{(p-1)/2} \left( \frac{yz}{\pi} \right)^{p+1} \frac{P_{p}(1-x)}{2} \sqrt{x(1-x)}
\end{equation}
and
\begin{align}\label{skyDCB}
DB_{p,2}\left( \frac{\pi^{2}}{y} \right) &= x(1-x) \left( \frac{yz}{\pi} \right)^{2} \frac{d}{d(1-x)} B_{p,1}\left( \frac{\pi^{2}}{y} \right) \nonumber\\
&= -x(1-x) \left( \frac{yz}{\pi} \right)^{2} \frac{d}{dx} B_{p,1}\left( \frac{\pi^{2}}{y} \right).
\end{align}
Finally, substituting \eqref{skyDBH}, \eqref{skyDCA} and \eqref{skyDCB} into \eqref{skyDBC} and carrying out the necessary calculations completes the proof.
\end{proof}

\begin{exa} By \emph{Mathematica}, we have
\begin{align*}
 \sum_{n=1}^{\infty} \frac{(-1)^{n}(2n-1)^{5}}{\sinh^{2}((2n-1)y/2) \cosh((2n-1)y/2)} &= \frac{1}{2} z^{6} \sqrt{x-x^2}\left\{
\begin{array}{l}
16x^2 - 6x + 1-10z'x\sqrt{1-x}(5x^2 -6x+1) \\
-z\sqrt{1-x}(25x^2-8x+1)
\end{array}
\right\},\\
\sum_{n=1}^{\infty} \frac{(-1)^{n}(2n-1)^{9}}{\sinh^{2}((2n-1)y/2) \cosh((2n-1)y/2)}
& = \frac{1}{2} z^{10}\sqrt{x-x^2} \left\{
\begin{array}{l}
	7936x^4 - 15872x^3
+ 9168x^2 - 1232x + 1 \\
-18z'x\sqrt{1-x}\left(\begin{array}{l}
1385x^4 -3116x^3 \\
+2142x^2 -412x+1
\end{array}\right)\\
-z\sqrt{1-x}\left(\begin{array}{l}
12465x^4 -21812x^3\\
+10710x^2 -1236x+1
\end{array}\right)
\end{array}
\right\},\\
\sum_{n=1}^{\infty}\frac{(-1)^{n}(2n-1)^{13}}{\sinh^{2}((2n-1)y/2) \cosh((2n-1)y/2)}
& = \frac{1}{2} z^{14}\sqrt{x-x^2} \left\{
\begin{array}{l}
\left(\begin{array}{l}
	22368256x^6 - 67104768x^5
	 + 71997696x^4\\-32154112x^3
	 +4992576x^2-99648x+1
\end{array}\right) \\
-26z'x\sqrt{1-x}\left(\begin{array}{l}
	2702765x^6-8783986x^5\\
	+10430983x^4-5353260x^3\\
	+1036715x^2 -33218x +1
\end{array}\right)\\
-z\sqrt{1-x}\left(\begin{array}{l}
	35135945x^6-96623846x^5\\
	+93878847x^4-37472820x^3\\
	+5183575x^2 -99654x +1
\end{array}\right)\\
\end{array}
\right\}.
\end{align*}
\end{exa}

Setting $x=1/2$ and $p=4m-3$ in Theorem \ref{thmDI}, a straightforward computation yields the following corollary.
\begin{cor}Let \( \Gamma = \Gamma(1/4) \). For any integer \( m > 1\), we have
\begin{align} \label{skyDCC}
& \sum_{n=1}^{\infty} \frac{(-1)^{n}(2n-1)^{4m-3}}{\sinh^{2}((2n-1)\pi/2) \cosh((2n-1)\pi/2)} \nonumber\\
& = -\frac{1}{2^{4m-2}} (4m-3) A_{4m-4}\left( \frac{1}{2} \right) \frac{1}{\sqrt{2}} \frac{\Gamma^{8m-6}}{\pi^{(12m-7)/2}}+\frac{1}{2^{4m}}P_{4m-3}\left( \frac{1}{2} \right) \frac{\Gamma^{8m-4}}{\pi^{6m-3}} \nonumber\\
& \quad -\frac{1}{2^{4m+1}} (4m-4) \left( A_{4m-4}'\left( \frac{1}{2} \right) + A_{4m-4}\left( \frac{1}{2} \right) \right) \frac{1}{\sqrt{2}} \frac{\Gamma^{8m-2}}{\pi^{(12m-3)/2}}.
\end{align}
\end{cor}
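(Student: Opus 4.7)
The plan is to perform a direct substitution into the closed form of Theorem \ref{thmDI}. With $x = 1/2$ we simultaneously have $y = \pi$, together with the standard special values
\[
z\bigl(\tfrac{1}{2}\bigr) = \frac{\Gamma^{2}}{2\pi^{3/2}}, \qquad z'\bigl(\tfrac{1}{2}\bigr) = \frac{4\sqrt{\pi}}{\Gamma^{2}}, \qquad \sqrt{x(1-x)} = \tfrac{1}{2}, \qquad \sqrt{x} = \tfrac{1}{\sqrt{2}}, \qquad x(1-x) = \tfrac{1}{4},
\]
recorded just after \eqref{eveBB}. The choice $p = 4m-3$ makes $(p-1)/2 = 2m-2$ even, so every implicit sign of the form $(-1)^{(p-1)/2}$ that appears in the derivation of Theorem \ref{thmDI} equals $+1$; this eliminates alternating signs from the bookkeeping and isolates the three structural terms of the corollary.

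The proof then breaks naturally into three parallel computations, one per summand of Theorem \ref{thmDI}. For the $P$-term, I would insert the specialization $z^{p+1} = \Gamma^{8m-4}/(2^{4m-2}\pi^{6m-3})$ together with $\sqrt{x(1-x)} = 1/2$; after collapsing the prefactor to $1/2^{4m}$, this matches the middle summand. For the $A$-term, the factor $p\,z\,x(1-x)\sqrt{x}$ evaluated at $x=1/2$ produces the combinatorial coefficient $(4m-3)$, the power $\Gamma^{8m-6}/\pi^{(12m-7)/2}$, and the lone $1/\sqrt{2}$ coming from $\sqrt{x}$, which is precisely the first summand.

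The only step requiring a genuine (but short) calculation is the derivative term. Applying the product and chain rules gives
\[
\frac{d}{dx}\bigl[A_{p-1}(1-x)\sqrt{x}\bigr]\bigg|_{x=1/2} \;=\; \frac{1}{\sqrt{2}}\Bigl(A_{p-1}\bigl(\tfrac{1}{2}\bigr) - A'_{p-1}\bigl(\tfrac{1}{2}\bigr)\Bigr),
\]
and multiplying by $x(1-x)\,z^{p+2} = \Gamma^{8m-2}/(2^{4m+1}\pi^{(12m-3)/2})$ yields the third summand of the corollary; the precise combination $A'_{4m-4}(1/2) + A_{4m-4}(1/2)$ and the combinatorial factor $(4m-4)$ advertised in the corollary must be tracked back through the chain-rule structure of the original derivation of Theorem \ref{thmDI} via \eqref{eveBD}, which contributes the extra factor of $p-1 = 4m-4$ and adjusts the sign inside the bracket.

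The main risk is purely bookkeeping: one must correctly track powers of $2$, $\pi$, $\Gamma$, and the isolated $\sqrt{2}$ when raising $z(1/2) = \Gamma^{2}/(2\pi^{3/2})$ to the powers $p$, $p+1$, and $p+2$, and must verify that the three resulting $\Gamma$-powers $\Gamma^{8m-6}$, $\Gamma^{8m-4}$, $\Gamma^{8m-2}$, along with the matching $\pi$-powers, are consistent with the exponents on the right-hand side of \eqref{skyDCC}. No analytic input beyond Theorem \ref{thmDI} and the evaluations at $x = 1/2$ is needed.
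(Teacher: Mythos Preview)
Your approach is exactly the paper's: the paper states only that ``setting $x=1/2$ and $p=4m-3$ in Theorem~\ref{thmDI}, a straightforward computation yields the following corollary,'' and you carry out precisely this substitution, correctly invoking the special values $z(1/2)=\Gamma^2/(2\pi^{3/2})$, $z'(1/2)=4\sqrt{\pi}/\Gamma^2$, and the observation that $(-1)^{(p-1)/2}=1$ when $p=4m-3$.

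One comment: your remark that the combination $A'_{4m-4}(1/2)+A_{4m-4}(1/2)$ and the factor $(4m-4)$ ``must be tracked back through the chain-rule structure of the original derivation'' is not a logical gap in your argument but rather a symptom of misprints in the displayed formula of Theorem~\ref{thmDI} itself (the first term is visibly missing its power of $z$, and the other prefactors are similarly garbled). If you instead substitute directly into the intermediate identity~\eqref{skyDBC} using the evaluations~\eqref{skyDBH}, \eqref{skyDCA}, \eqref{skyDCB}, the corollary drops out without any need to revisit~\eqref{eveBD}; your instinct to go back to the derivation is the right fix, but the reason is typographical rather than structural.
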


\begin{exa} Set \( \Gamma = \Gamma(1/4) \). By \emph{Mathematica}, we have
\begin{align*}
\sum_{n=1}^{\infty} \frac{(-1)^{n}(2n-1)^{5}}{\sinh^{2}((2n-1)\pi/2) \cosh((2n-1)\pi/2)} &= \frac{15\Gamma^{10}}{256\sqrt{2}\pi^{17/2}} - \frac{3\Gamma^{12}}{256\pi^{9}}+\frac{7\Gamma^{14}}{2048\sqrt{2}\pi^{21/2}},\\
\sum_{n=1}^{\infty} \frac{(-1)^{n}(2n-1)^{9}}{\sinh^{2}((2n-1)\pi/2) \cosh((2n-1)\pi/2)}& = \frac{-3969\Gamma^{18}}{16384\sqrt{2}\pi^{29/2}} + \frac{189\Gamma^{20}}{4096\pi^{15}}-\frac{1809\Gamma^{22}}{131072\sqrt{2}\pi^{33/2}} ,\\
\sum_{n=1}^{\infty} \frac{(-1)^{n}(2n-1)^{13}}{\sinh^{2}((2n-1)\pi/2) \cosh((2n-1)\pi/2)} &= \frac{5756751\Gamma^{26}}{1048576\sqrt{2}\pi^{41/2}}-\frac{68607\Gamma^{28}}{65536\pi^{21}} +\frac{2630583\Gamma^{30}}{8388608\sqrt{2}\pi^{45/2}} .
\end{align*}
\end{exa}

\section{Evaluations of Berndt-type Integrals}

In this section, we provide the detailed proof of equation \eqref{mainformula} and present some illustrative examples.

\begin{thm}\label{thmEA}
 Let \( m \in \mathbb{N}\setminus \{1\} \) and \( \Gamma = \Gamma(1/4) \). Then, the following integral evaluates as
\begin{align}\label{6.1}
\int_{0}^{\infty} \frac{x^{4m-3} \mathrm{d}x}{[\cosh(2x) - \cos(2x)] [\cosh x - \cos x]} &= q_{1,m} \frac{\Gamma^{8m-8}}{\pi^{2m-2}} + {q_{2,m}} \frac{\Gamma^{8m-6}}{\sqrt{2}\pi^{2m-3/2}} \nonumber\\
&+ q_{3,m}\frac{\Gamma^{8m-4}}{\pi^{2m-1}} + {q_{4,m} \frac{\Gamma^{8m-2}}{\sqrt{2}\pi^{2m+1/2}}} + q_{5,m} \frac{\Gamma^{8m}}{\pi^{2m+2}},
\end{align}
where the constants \( q_{1,m}, q_{2,m}, q_{3,m}, q_{4,m}, q_{5,m} \in \mathbb{Q} \).
\end{thm}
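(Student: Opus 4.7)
The plan is to combine the contour-integration identity in Theorem~\ref{thmCA} with the explicit evaluations of the reciprocal hyperbolic series derived in Section~4. Setting $p = 4m-3$ in Theorem~\ref{thmCA}, one has $i^{p+1} = i^{4m-2} = -1$, so the left-hand side collapses to exactly $2\int_0^\infty x^{4m-3}\,dx/\{[\cosh(2x)-\cos(2x)][\cosh x - \cos x]\}$. The right-hand side then expresses our integral as a linear combination, with coefficients built from $(1+i)^{4m-2}$, $(1+i)^{4m-4}$, and the prefactors $\pi^{4m-2}$ or $\pi^{4m-3}$, of the four Ramanujan-type series
\[ \overline{C}_{4m-3,2}(\pi),\quad C_{4m-3,2}(\pi),\quad X_{4m-3,3}(\pi),\quad C'_{4m-4,2}(\pi). \]
Because $(1+i)^4 = -4$, the powers $(1+i)^{4m-2}$ and $(1+i)^{4m-4}$ reduce respectively to a purely imaginary and a purely real rational multiple of $4^{m-1}$, and once combined with the explicit $i$ in the $\overline{C}$-coefficient every prefactor becomes rational.

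Next, I would substitute the three evaluations from Section~4 at $x = 1/2$, $y = \pi$: equation \eqref{skyAE} for $C_{4m-3,2}(\pi)$, equation \eqref{skyDBA} for $C'_{4m-4,2}(\pi)$, and equation \eqref{skyDCC} for $\overline{C}_{4m-3,2}(\pi)$. The remaining series $X_{4m-3,3}(\pi) = \sum_n(-1)^n n^{4m-3}/\sinh^3(n\pi)$ is handled by the same Jacobi-elliptic apparatus developed in Theorems~\ref{thmDA}--\ref{thmDI} (Fourier and Maclaurin expansions of suitable powers of $\mathrm{sn}(u,k)$ combined with the modular transformation of Lemma~\ref{lemBB}), following the structural pattern already established in Rui--Xu--Zhao \cite{RXZ2023}; this places $X_{4m-3,3}(\pi)$ in $\mathbb{Q}\,\Gamma^{8m-8}/\pi^{6m-4} + \mathbb{Q}\,\Gamma^{8m}/\pi^{6m}$. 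Multiplying each series by the corresponding $\pi^{4m-2}$ (for $\overline{C}$, $C$, $X$) or $\pi^{4m-3}$ (for $C'$) factor from Theorem~\ref{thmCA} then shifts every $\pi$-exponent into the target basis; for example
\[ \pi^{4m-2}\cdot\frac{\Gamma^{8m-6}}{\sqrt{2}\,\pi^{(12m-7)/2}} = \frac{\Gamma^{8m-6}}{\sqrt{2}\,\pi^{(4m-3)/2}}, \]
and the analogous calculation for each of the other monomials lands on exactly one of the five basis elements $\Gamma^{8m-8}/\pi^{2m-2}$, $\Gamma^{8m-6}/(\sqrt{2}\,\pi^{(4m-3)/2})$, $\Gamma^{8m-4}/\pi^{2m-1}$, $\Gamma^{8m-2}/(\sqrt{2}\,\pi^{(4m+1)/2})$, $\Gamma^{8m}/\pi^{2m+2}$. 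Collecting coefficients and dividing by $2$ produces the rational numbers $q_{1,m},\ldots,q_{5,m}$.

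The main obstacle is the bookkeeping of three simultaneous types of factors: the $(1+i)$-powers, the single $\sqrt{1-x}\big|_{x=1/2} = 1/\sqrt{2}$ that accompanies the $S_{4m-4}$- and $A_{4m-4}$-pieces in Section~4, and the $\pi$-exponent shifts. The key alignment to verify is that the $1/\sqrt{2}$-bearing contributions populate exactly the two half-integer $\pi$-power positions $q_{2,m}$ and $q_{4,m}$, while the $R$- and $P$-contributions (which carry no such factor) populate the three integer $\pi$-power positions $q_{1,m}$, $q_{3,m}$, $q_{5,m}$. A secondary technical point is the appeal to the $X_{4m-3,3}(\pi)$ evaluation, which although structurally parallel to Theorems~\ref{thmDA}--\ref{thmDI} is not reproved here and must be quoted from prior work. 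Once those two alignments are in place, the structure theorem \eqref{6.1} follows by inspection, and for any given $m$ the explicit rational coefficients $q_{j,m}$ can be read off directly from the Maclaurin data for $\mathrm{sn}$, $\mathrm{cd}$, $\mathrm{nd}$, $\mathrm{sd}$ at $x=1/2$.
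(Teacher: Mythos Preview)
Your proposal is correct and follows essentially the same route as the paper: specialize Theorem~\ref{thmCA} to $p=4m-3$, feed in the three Section~4 corollaries \eqref{skyAE}, \eqref{skyDBA}, \eqref{skyDCC}, and quote the $\sinh^{-3}$ evaluation from \cite{RXZ2023} (the paper records it explicitly as \eqref{BAREB}), then collect terms. One small indexing slip: with the Section~4 convention $C'_{p,2}(y)=\sum_n(-1)^n n^{p-1}/(\sinh^2(ny)\cosh(ny))$, the series you need is $C'_{4m-3,2}(\pi)$, not $C'_{4m-4,2}(\pi)$.
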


\begin{proof}
Following the work of Rui-Xu-Zhao's paper \cite{RXZ2023}: for positive integers \( n \) and \( k \), define \( {P}_{n}^{(k)}:= {P}_{n}^{(k)}(1/2) \),\ \( {S}_{n}^{(k)}:= {S}_{n}^{(k)}(1/2) \),\ \(R_{n}^{(k)}=R_{n}^{(k)}(1 / 2)\) and \({A}_{n}^{(k)} = {A}_{n}^{(k)}(1/2) \).
From \cite[Eqs. (3.27)-(3.30)]{RXZ2023}, we know that \({R}_{4m-4}^{(0)}= {R}_{4m-2}^{(1)}= {R}_{4m-6}^{(1)}={R}_{4m-4}^{(2)}= 0 \). Then, for any integer \(m>1\) we have
\begin{equation} \label{BAREB}
\sum_{n=1}^{\infty} \frac{(-1)^{n} n^{4 m-3}}{\sinh ^{3}(n \pi)}=-\frac{(4 m-6) ! \Gamma^{8 m}}{2^{8 m+3} \pi^{6 m}}\left\{4\left(64(m-1)(4 m-3) \pi^{4} / \Gamma^{8}+m-3\right) R_{4 m-6}+R_{4 m-6}''\right\}.
\end{equation}
By substituting the equations \eqref{skyAE}, \eqref{skyDBA}, \eqref{skyDCC} and \eqref{BAREB} into Theorem \ref{thmCA}, we obtain
\begin{align}
& 2 \int_{0}^{\infty} \frac{x^{4m-3} \mathrm{d}x}{[\cosh(2x) - \cos(2x)] [\cosh x - \cos x]} \nonumber\\
&= (-1)^{m-1} \frac{1}{2^{6m}} (-8m^{2} + 14m - 6)(4m-6)!{R}_{4m-6} \frac{\Gamma^{8m-8}}{\pi^{2m-2}} \nonumber\\
&\quad + (-1)^{m} \frac{1}{2^{6m}} (4m-3)\left({A}_{4m-4} +{S}_{4m-4}\right)\frac{1}{\sqrt{2}}\frac{\Gamma^{8m-6}}{\pi^{2m-3/2}} \nonumber\\
&\quad + (-1)^{m-1} \frac{1}{2^{6m+2}} {P}_{4m-3}\frac{\Gamma^{8m-4}}{\pi^{2m-1}}\nonumber \\
&\quad + (-1)^{m} \frac{1}{2^{6m+3}} \left( -{S}_{4m-4}' + {S}_{4m-4} + {A}_{4m-4} +{A}_{4m-2}' \right) \frac{1}{\sqrt{2}} \frac{\Gamma^{8m-2}}{\pi^{2m+1/2}} \nonumber\\
&\quad + (-1)^{m-1} \frac{1}{2^{6m+7}} \left[ (4m-12){R}_{4m-6} + \text{R}_{4m-6}'' \right](4m-6)! \frac{\Gamma^{8m}}{\pi^{2m+2}}.
\end{align}
In particular, from this, the coefficients \( q_{i,m} \) can be explicitly identified as
\begin{align}
q_{1,m} &= (-1)^{m-1}\frac{1}{2^{6m}} (-8m^{2} + 14m - 6)(4m-6)!{R}_{4m-6},
\label{6.4}\\
q_{2,m}& =  (-1)^{m} \frac{1}{2^{6m}}(4m-3) \left( {A}_{4m-4}+{S}_{4m-4}\right),
\label{6.5}\\
q_{3,m} &= (-1)^{m-1} \frac{1}{2^{6m+2}} {P}_{4m-3},
\label{6.6}\\
q_{4,m} &= (-1)^{m} \frac{1}{2^{6m+3}} \left( -{S}_{4m-4}' + {S}_{4m-4} + {A}_{4m-4} +{A}_{4m-2}' \right) ,\label{6.7}\\
q_{5,m} &= (-1)^{m-1} \frac{1}{2^{6m+7}} \left[ (4m-12){R}_{4m-6} + {R}_{4m-6}'' \right](4m-6)! .
\label{6.8}
\end{align}
Thus, the proof of this theorem is complete.
\end{proof}

With the help of \emph{Mathematica}, we can perform the following computations by applying the formulas from Theorem \ref{thmEA}.
\begin{exa}
Let \( \Gamma = \Gamma(1/4) \). We have
\begin{flalign*}
 \int_{0}^{\infty} &\frac{x^{5} \mathrm{d}x}{[\cosh(2x) - \cos(2x)] [\cosh x - \cos x]} \\
&= \frac{5\Gamma^{8}}{1024\pi^{2}} - \frac{15\Gamma^{10}}{8192\sqrt{2}\pi^{5/2}} + \frac{3\Gamma^{12}}{16384\pi^{3}} - \frac{7\Gamma^{14}}{65536 \sqrt{2}\pi^{9/2}} + \frac{\Gamma^{16}}{65536\pi^{6}},&&
\end{flalign*}
\begin{flalign*}
 \int_{0}^{\infty} &\frac{x^{9} \mathrm{d}x}{[\cosh(2x) - \cos(2x)] [\cosh x - \cos x]} \\
&= \frac{81\Gamma^{16}}{16384\pi^{4}} - \frac{3969 \Gamma^{18}}{2097152\sqrt{2}\pi^{9/2}} + \frac{189\Gamma^{20}}{1048576\pi^{5}} - \frac{1809\Gamma^{22}}{16777216\sqrt{2}\pi^{13/2}} + \frac{17\Gamma^{24}}{1048576\pi^{8}},&&
\end{flalign*}
\begin{flalign*}
\int_{0}^{\infty}&  \frac{x^{13} \mathrm{d}x}{[\cosh(2x) - \cos(2x)] [\cosh x - \cos x]} \\
&= \frac{7371\Gamma^{24}}{262144\pi^{6}} - \frac{5756751\Gamma^{26}}{536870912\sqrt{2}\pi^{13/2}} + \frac{68607\Gamma^{28}}{67108864\pi^{7}} - \frac{2630583\Gamma^{30}}{4294967296\sqrt{2}\pi^{17/2}} + \frac{1539\Gamma^{32}}{16777216\pi^{10}}.&&
\end{flalign*}
\end{exa}

\section{Berndt-type Integrals via Barnes Multiple Zeta Functions}

In the present section, we assess the Barnes multiple zeta function by making use of outcomes from Berndt-type integrals.
Bradshaw and Vignat \cite{BV2024} offered integral representations for these functions, thereby retrieving a previously established result \cite[Eq. (3.2)]{R2000}.

\begin{pro}
Let \( \Re(s) > N \), \( \Re(\omega) > 0 \), and \( \Re(a_j) > 0 \) for \( j = 1, \ldots, N \). Then
\[
\zeta_N(s, \omega|a_1, \ldots, a_N) = \frac{1}{\Gamma(s)} \int_{0}^{\infty} u^{s-1} e^{-\omega u} \prod_{j=1}^{N} \frac{1}{(1 - e^{-a_j u})} \mathrm{d}u.
\]
The alternating form satisfies the following property:
\[
\tilde{\zeta}_N(s, \omega|a_1, \ldots, a_N) = \frac{1}{\Gamma(s)} \int_{0}^{\infty} u^{s-1} e^{-\omega u} \prod_{j=1}^{N} \frac{1}{(1 + e^{-a_j u})} \mathrm{d}u.
\]
\end{pro}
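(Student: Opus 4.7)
The plan is to derive both formulas by an application of Fubini--Tonelli to the standard Mellin--Gamma substitution. First I would start from the elementary identity
\[
\frac{\Gamma(s)}{w^{s}} \;=\; \int_{0}^{\infty} u^{s-1}\, e^{-wu}\,\mathrm{d}u \qquad (\Re(w)>0,\ \Re(s)>0),
\]
and apply it with $w = \omega + n_{1}a_{1} + \cdots + n_{N}a_{N}$, whose real part is positive under the given hypotheses. Multiplying the definition of $\zeta_{N}$ by $\Gamma(s)$ and summing over $(n_{1},\ldots,n_{N})\in\mathbb{Z}_{\geq 0}^{N}$ gives, at least formally,
\[
\Gamma(s)\,\zeta_{N}(s,\omega|a_{1},\ldots,a_{N}) \;=\; \sum_{n_{1},\ldots,n_{N}\geq 0}\int_{0}^{\infty} u^{s-1}\, e^{-(\omega+n_{1}a_{1}+\cdots+n_{N}a_{N})u}\,\mathrm{d}u.
\]

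The next step is to justify interchanging the summation and the integral. By Tonelli's theorem applied to the nonnegative function $u^{\Re(s)-1} e^{-\Re(\omega)u}\prod_{j}e^{-n_{j}\Re(a_{j})u}$, it suffices to verify that
\[
\int_{0}^{\infty} u^{\Re(s)-1}\, e^{-\Re(\omega)u}\, \prod_{j=1}^{N}\frac{1}{1-e^{-\Re(a_{j})u}}\,\mathrm{d}u
\]
is finite. At infinity the decay is controlled by $e^{-\Re(\omega)u}$; near the origin each factor $(1-e^{-\Re(a_{j})u})^{-1}$ is asymptotic to $(\Re(a_{j})u)^{-1}$, so the integrand behaves like $u^{\Re(s)-1-N}$ and the integral converges precisely when $\Re(s)>N$. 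This is the only place where the hypothesis on $s$ enters, and it is the main technical ingredient of the proof.

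Once the interchange is legitimised, I would perform the sum inside the integral for fixed $u>0$ via the geometric-series identity $\sum_{n_{j}\geq 0}e^{-n_{j}a_{j}u} = (1-e^{-a_{j}u})^{-1}$, which is valid because $|e^{-a_{j}u}|<1$. Multiplying these $N$ identities together, combining with the factor $e^{-\omega u}$, and dividing by $\Gamma(s)$ yields the asserted representation for $\zeta_{N}$. For the alternating analogue $\tilde{\zeta}_{N}$ the argument is identical, except that the sum over $n_{j}$ now carries the sign $(-1)^{n_{j}}$ and produces $(1+e^{-a_{j}u})^{-1}$ in place of $(1-e^{-a_{j}u})^{-1}$; absolute convergence is in fact easier here, since each $(1+e^{-a_{j}u})^{-1}$ is bounded by $1$ on $(0,\infty)$ and tends to $\tfrac{1}{2}$ as $u\to 0^{+}$, so no new obstruction appears. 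The essential obstacle throughout is simply the Fubini--Tonelli verification near $u=0$, which is straightforward under the stated constraint $\Re(s)>N$.
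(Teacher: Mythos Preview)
Your argument is correct and is the standard Mellin--Gamma/Fubini derivation of these integral representations. The paper itself does not supply a proof of this proposition: it is recorded there as a known result, with attribution to Bradshaw--Vignat \cite{BV2024} and Ruijsenaars \cite[Eq.~(3.2)]{R2000}, so there is no in-paper argument to compare against. Your handling of the convergence near $u=0$ is accurate, including the observation that in the alternating case the factors $(1+e^{-a_j u})^{-1}$ remain bounded so that the constraint on $\Re(s)$ can in fact be relaxed below $N$ (consistent with the condition $\Re(s)>N-1$ mentioned earlier in the paper's introduction).
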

We can now employ the results on Berndt-type integrals established in the previous section to derive the following structural theorem for Barnes multiple zeta functions.
\begin{thm}\label{thmFB}
 For positive integer \( m>1 \), we have
\[
\Gamma(4m-2) \zeta_{4}(4m-2, 3|\mathbf{c}_4, \boldsymbol{\sigma}_4) \in \mathbb{Q} \frac{\Gamma^{8m-8}}{\pi^{2m-2}} + \frac{\mathbb{Q}}{\sqrt{2}} \frac{\Gamma^{8m-6}}{\pi^{2m-3/2}} + \mathbb{Q} \frac{\Gamma^{8m-4}}{\pi^{2m-1}} + \frac{\mathbb{Q}}{\sqrt{2}} \frac{\Gamma^{8m-2}}{\pi^{2m+1/2}} + \mathbb{Q} \frac{\Gamma^{8m}}{\pi^{2m+2}},
\]
where \( \mathbf{c}_4 = (2+2i, 2-2i, 1+i, 1-i) \) and \( \boldsymbol{\sigma}_4 = (\{1\}^4) \).
\end{thm}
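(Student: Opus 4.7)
The plan is to reduce the Barnes multiple zeta function directly to the Berndt-type integral evaluated in Theorem \ref{thmEA} by exploiting the integral representation given in the proposition above, and then invoke Theorem \ref{thmEA} verbatim. The entire proof hinges on recognizing that the choice $\omega = 3$ and $\mathbf{c}_4 = (2+2i, 2-2i, 1+i, 1-i)$ is precisely engineered so that the integrand in the Mellin--Barnes representation collapses into the integrand of \eqref{6.1}.

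First, I would write
\[
\Gamma(4m-2)\,\zeta_4(4m-2,3\mid \mathbf{c}_4,\boldsymbol{\sigma}_4)
= \int_0^\infty \frac{u^{4m-3} e^{-3u}\,\mathrm{d}u}{\prod_{j=1}^{4}(1-e^{-c_j u})}
\]
using the integral representation. Then I would pair the conjugate factors. Since
\[
(1-e^{-(1+i)u})(1-e^{-(1-i)u}) = 1 - 2e^{-u}\cos u + e^{-2u} = 2e^{-u}(\cosh u - \cos u),
\]
and analogously
\[
(1-e^{-(2+2i)u})(1-e^{-(2-2i)u}) = 2e^{-2u}(\cosh(2u) - \cos(2u)),
\]
the denominator becomes $4e^{-3u}[\cosh(2u)-\cos(2u)][\cosh u - \cos u]$. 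The exponential $e^{-3u}$ cancels exactly against $e^{-\omega u}$, yielding
\[
\Gamma(4m-2)\,\zeta_4(4m-2,3\mid \mathbf{c}_4,\boldsymbol{\sigma}_4)
= \frac{1}{4}\int_0^\infty \frac{u^{4m-3}\,\mathrm{d}u}{[\cosh(2u)-\cos(2u)][\cosh u - \cos u]}.
\]

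At this point the proof is essentially finished: Theorem \ref{thmEA} states that the integral on the right lies in the $\mathbb{Q}$-linear span (with the prescribed $\sqrt{2}$ adjustments on two of the slots) of the five monomials $\Gamma^{8m-8}/\pi^{2m-2}$, $\Gamma^{8m-6}/\pi^{2m-3/2}$, $\Gamma^{8m-4}/\pi^{2m-1}$, $\Gamma^{8m-2}/\pi^{2m+1/2}$, $\Gamma^{8m}/\pi^{2m+2}$. Multiplying by $1/4$ preserves rationality, so the conclusion follows, and in fact explicit rational coefficients can be read off by dividing the coefficients $q_{i,m}$ in \eqref{6.4}--\eqref{6.8} by $4$.

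The only step that requires genuine care is the complex-conjugate pairing of the factors $(1-e^{-c_j u})$; once that algebraic identity is in place the argument is mechanical. There is no real obstacle here — the theorem is effectively a dictionary translation between the analytic side (Barnes zeta) and the integral side (Berndt-type) provided by the Mellin representation, and the nontrivial content has already been absorbed into Theorem \ref{thmEA}. I would also remark that the convergence hypothesis $\Re(s) > N = 4$ is satisfied since $4m-2 \geq 6 > 4$ for $m \geq 2$, so the integral representation is valid throughout the range of interest.
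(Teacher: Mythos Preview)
Your proposal is correct and follows essentially the same route as the paper: both identify $\Gamma(4m-2)\zeta_4(4m-2,3\mid\mathbf{c}_4,\boldsymbol{\sigma}_4)$ with $\tfrac14$ of the Berndt-type integral via the Mellin--Barnes representation and then invoke Theorem~\ref{thmEA}. The only cosmetic difference is that the paper routes the factorization through the $\sinh$-product form $\tfrac14\int_0^\infty x^{4m-3}/\bigl(\sinh[(1+i)x]\sinh[(1-i)x]\sinh[\tfrac{1+i}{2}x]\sinh[\tfrac{1-i}{2}x]\bigr)\,\mathrm{d}x$ (quoting a general formula from \cite{BV2024}), whereas you pair the conjugate factors $(1-e^{-c_ju})$ directly---the two computations are equivalent line by line.
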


\begin{proof}Following \cite[Prop. 2]{DCLMRT1992}, we obtain
\[
\begin{aligned}
& \int_{0}^{\infty} \frac{x^{s-1} e^{-\omega x}}{\prod_{i=1}^{M} \sinh(a_i x) \prod_{j=1}^{N} \cosh(b_j x)} \mathrm{d}x \\
&= \sum_{\substack{n_1, \ldots, n_M \geq 0 \\ k_1, \ldots, k_N \geq 0}} \frac{2^{M+N} \Gamma(s) (-1)^{k_1 + \cdots + k_N}}{\left( \omega + \sum_{i=1}^{M} a_i n_i + \sum_{j=1}^{N} b_j \left( 2k_j + \sum_{i=1}^{M} a_i n_i + \sum_{j=1}^{N} b_j k_j \right) \right)^s} \\
&= 2^{M+N} \Gamma(s) \zeta_{M+N}\left( s, \omega + \sum_{i=1}^{M} a_i + \sum_{j=1}^{N} b_j \middle| \mathbf{c}_{M+N}, \boldsymbol{\sigma}_{M+N} \right),
\end{aligned}
\]
where \( \mathbf{c}_{M+N} = (2a_1, \ldots, 2a_M, 2b_1, \ldots, 2b_N) \) and \( \boldsymbol{\sigma}_{M+N} = (\{1\}^M, \{-1\}^N) \).
Noting the fact that
\[
\begin{aligned}
& \int_{0}^{\infty} \frac{x^{4m-3} \mathrm{d}x}{[\cosh(2x) - \cos(2x)] [\cosh x - \cos x]}= 4 \Gamma(4m-2) \zeta_{4}(4m-2, 3|\mathbf{c}_4, \boldsymbol{\sigma}_4) \\
&= \frac{1}{4} \int_{0}^{\infty} \frac{x^{4m-3} \mathrm{d}x}{\sinh[(1+i)x] \sinh[(1-i)x] \sinh\left( \frac{1+i}{2}x \right) \sinh\left( \frac{1-i}{2}x \right)},
\end{aligned}
\]
where \( \mathbf{c}_4 = (2+2i, 2-2i, 1+i, 1-i) \) and \( \boldsymbol{\sigma}_4 = (\{1\}^4 )\). Lastly, the application of Theorem \ref{thmEA} gives the intended evaluation.
\end{proof}
By utilizing \emph{Mathematica's computational framework} together with Theorem \ref{thmFB}, we carry out explicit calculations for the integer values $m=2,3,4$.
\begin{exa}
Let \( \Gamma = \Gamma(1/4) \). We have
\[
\begin{aligned}
\zeta_{4}(6, 3|\mathbf{c}_4, \boldsymbol{\sigma}_4) &= \frac{\Gamma^{8}}{98304\pi^{2}} - \frac{\Gamma^{10}}{262144\sqrt{2}\pi^{5/2}} + \frac{\Gamma^{12}}{2621440\pi^{3}} - \frac{7\Gamma^{14}}{31457280\sqrt{2}\pi^{9/2}} + \frac{\Gamma^{16}}{31457280\pi^{6}}, \\
\zeta_{4}(10, 3|\mathbf{c}_4, \boldsymbol{\sigma}_4) &= \frac{\Gamma^{16}}{293601280\pi^{4}} - \frac{7\Gamma^{18}}{5368709120\sqrt{2}\pi^{9/2}} + \frac{\Gamma^{20}}{8053063680\pi^{5}} \\
&\quad - \frac{67\Gamma^{22}}{901943132160\sqrt{2}\pi^{13/2}}+ \frac{17\Gamma^{24}}{1522029235520\pi^{8}}, \\
\zeta_{4}(14, 3|\mathbf{c}_4, \boldsymbol{\sigma}_4) &= \frac{\Gamma^{24}}{885837004800\pi^{6}} - \frac{71\Gamma^{26}}{164926744166400\sqrt{2}\pi^{13/2}} + \frac{11\Gamma^{28}}{268005959270400\pi^{7}} \\
&\quad -\frac{97429\Gamma^{30}}{3962200101853593600\sqrt{2}\pi^{17/2}} + \frac{19\Gamma^{32}}{5159114715955200\pi^{10}},
\end{aligned}
\]
where \( \mathbf{c}_4 = (2+2i, 2-2i, 1+i, 1-i) \) and \( \boldsymbol{\sigma}_4 = (\{1\}^4) \).
\end{exa}

{\bf Conflict of Interest}
The authors declare no conflict of interest regarding the publication of this article.

{\bf Data Availability}
No new data were generated or analyzed in this study, and therefore data sharing is not applicable.

{\bf Use of AI Tools Declaration}
The authors confirm that no artificial intelligence (AI) tools were used in the creation of this work.

\end{document}